  \newcommand\tinyv{\@setfontsize\tinyv{2pt}{3}}
\def\PP{{\mathbb P}}
\def\O{{\mathcal O}}
\def\M{{\mathcal M}}
\def\R{{\mathbb R}}
\def\C{{\mathbb C}}
\def\V{{\cal V}}
\def\U{{\cal U}}
\def\X{{\cal X}}
\def\T{{\mathbb T}}
\def\cT{{\cal T}}
\def\ell{{\it l}}
 \newtheorem{theorem}{Theorem}[section]
\newtheorem{lemma}[theorem]{Lemma} 
\newtheorem{prop}[theorem]{Proposition} 
\newtheorem{definition}[theorem]{Definition} 
\newtheorem{corollary}[theorem]{Corollary}
\newtheorem{remark}[theorem]{Remark}
\newtheorem{example}[theorem]{Example}  
\newcommand{\proof}{{\it Proof.}\ } 
\newcommand{\qed}{\hfill  $\Box$ }
\begin{document}
\title{A Geometric Perspective on the Singular Value Decomposition}
\author{Giorgio Ottaviani and Raffaella Paoletti}
\date{}

\maketitle

\begin{flushleft}{\it dedicated to Emilia Mezzetti}
\end{flushleft}

\begin{abstract}
This is an introductory survey, from a geometric perspective, on the Singular Value Decomposition (SVD) for real matrices, focusing on the role of the
Terracini Lemma. We extend this point of view to tensors, we define the singular space of a tensor as the space spanned by singular vector tuples
and we study some of its basic properties. 
\end{abstract}

\section{Introduction}
The {\it Singular Value Decomposition} (SVD) is a basic tool frequently used in Numerical Linear Algebra and in many applications, which generalizes the {\it Spectral Theorem} from symmetric $n\times n$ matrices to general $m\times n$ matrices. We introduce the reader to some of its beautiful properties, mainly related to the {\it Eckart-Young Theorem}, which has a geometric nature.
The implementation of a SVD algorithm in the computer algebra software {\it Macaulay2} allows a friendly use in many algebro-geometric computations.

This is the content of the paper. In Section 2 we see how the best rank $r$ approximation of a matrix can be described through its SVD; this is the celebrated Eckart-Young Theorem, that we revisit geometrically, thanks to the {\it Terracini Lemma}. In Section 3 we review the construction of the SVD of a matrix by means of the Spectral Theorem and we give a coordinate free version of SVD for linear maps between Euclidean vector spaces. In Section 4 we define the singular vector tuples of a tensor and we show how they are related to tensor rank; in the symmetric case, we get the eigentensors.  In Section 5 we define the singular space of a tensor, which is the space containing its singular vector tuples and we conclude with a discussion of the {\it Euclidean Distance (ED) degree}, introduced first in \cite{DHOST}. We thank the referee for many useful remarks.

\section{ SVD and the Eckart-Young theorem}\label{sec:svdey}

The vector space $\M = \M _{m,n}$ of $m\times n$ matrices with real entries has a natural filtration with subvarieties $\M _r=\{m\times n \textrm{\ matrices of rank\ }\le r\}$.
We have
$$\M_1\subset \M_2\subset\ldots \subset \M_{\min \{ m,n\}}$$
where the last subvariety $\M_{\min  \{ m,n\}}$ coincides with the ambient space.

\begin{theorem}[Singular Value Decomposition]\label{thm:svd} \hfill\break
Any real $m\times n$ matrix $A$ has the SVD 
$$A=U\Sigma V^t$$
where $U$, $V$ are orthogonal (respectively of size $m\times m$ and $n\times n$)
and \break
\noindent $\Sigma=\mathrm{Diag}(\sigma_1,\sigma_2,\ldots )$, with $\sigma_1\ge \sigma_2\ge\ldots \ge 0$. The $m\times n$ matrix $\Sigma$ has zero values at entries $(ij)$ with $i\neq j$ and sometimes it is called pseudodiagonal (we use the term diagonal only for square matrices).
\end{theorem}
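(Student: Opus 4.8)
The plan is to derive the SVD from the Spectral Theorem applied to the symmetric positive semidefinite matrix $A^tA$. First I would invoke the Spectral Theorem on $A^tA$, which is symmetric of size $n\times n$ and satisfies $v^t(A^tA)v=\|Av\|^2\ge 0$: this produces an orthonormal basis $v_1,\dots,v_n$ of $\R^n$ consisting of eigenvectors, with eigenvalues $\lambda_1\ge\lambda_2\ge\dots\ge 0$ arranged in decreasing order. I then set $\sigma_i=\sqrt{\lambda_i}$ and let $r$ be the largest index with $\lambda_r>0$, so that $\sigma_1\ge\dots\ge\sigma_r>0=\sigma_{r+1}=\dots$.

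Next I would build the left singular vectors. For $i\le r$ put $u_i=\sigma_i^{-1}Av_i$; a direct computation gives $u_i^tu_j=(\sigma_i\sigma_j)^{-1}v_i^tA^tAv_j=(\sigma_i\sigma_j)^{-1}\lambda_j v_i^tv_j=\delta_{ij}$, so $u_1,\dots,u_r$ are orthonormal in $\R^m$; I complete them by Gram--Schmidt to an orthonormal basis $u_1,\dots,u_m$ of $\R^m$. For the remaining indices $j>r$ one has $\|Av_j\|^2=v_j^tA^tAv_j=\lambda_j=0$, hence $Av_j=0$.

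Then I would assemble the orthogonal matrices $U=[u_1|\cdots|u_m]$ and $V=[v_1|\cdots|v_n]$ and compute the $(i,j)$ entry of $U^tAV$, namely $u_i^tAv_j$: for $j\le r$ this equals $\sigma_j\,u_i^tu_j=\sigma_j\delta_{ij}$, while for $j>r$ it is $0$ since $Av_j=0$. Hence $U^tAV=\Sigma$ with $\Sigma=\mathrm{Diag}(\sigma_1,\sigma_2,\dots)$ pseudodiagonal, and multiplying on the left by $U$ and on the right by $V^t$ yields $A=U\Sigma V^t$.

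The routine parts are the orthonormality checks and the entrywise computation; the one place that needs a little care is the treatment of the vanishing singular values — one cannot normalize $Av_j$ when $\sigma_j=0$, so the orthonormal basis of $\R^m$ must be completed abstractly rather than by the formula, and one must verify separately that $Av_j=0$ there. (Alternatively, the statement follows from a variational argument: maximize $\|Av\|$ over the unit sphere of $\R^n$ to obtain $v_1$, $\sigma_1$ and $u_1$, show by a first-order condition that $A$ maps $v_1^\perp$ into $u_1^\perp$, and induct on $\min\{m,n\}$; this version produces the decreasing order of the $\sigma_i$ automatically.)
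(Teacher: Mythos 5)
Your proposal is correct and follows essentially the same route as the paper: apply the Spectral Theorem to the positive semidefinite matrix $A^tA$, set $\sigma_i=\sqrt{\lambda_i}$ and $u_i=\sigma_i^{-1}Av_i$ for the nonzero eigenvalues, complete to an orthogonal $U$, and handle the kernel directions via $\|Av_j\|^2=\lambda_j=0$ (the paper phrases this as $\mathrm{Ker}(A^tA)=\mathrm{Ker}(A)$). The only cosmetic difference is that you verify $U^tAV=\Sigma$ entrywise while the paper expands $U\Sigma V^t$ directly and uses $I_n=\sum_i v_iv_i^t$, which is the same argument in a different order.
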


The diagonal entries $\sigma_i$ are called {\it singular values} of $A$ and it is immediate to check that $\sigma_i^2$
are the eigenvalues of both the symmetric matrices $AA^t$ and $A^tA$. We give a proof of Theorem \ref{thm:svd} in \S \ref{sec:svdproof}. We recommend \cite{Ste}
for a nice historical survey about SVD.

\noindent Decomposing $\Sigma=\mathrm{Diag}(\sigma_1,0,0,\cdots)+\mathrm{Diag}(0,\sigma_2,0,\cdots)+\cdots=:\Sigma_1+\Sigma_2+\cdots$
we find
$$A=U\Sigma_1 V^t+U\Sigma_2 V^t+\cdots$$
and the maximum $i$ for which $\sigma _i \neq 0$ is equal to the rank of the matrix $A$.
\vskip0.5truecm
Denote by $u_k, v_l$ the columns, respectively, of $U$ and $V$ in the SVD above. From the equality $A=U\Sigma V^t$ we get $AV=U\Sigma $ and considering the $i$th columns we get 

$Av_i =(AV)_i =(U\Sigma )_i =\left( (u_1,\cdots ,u_m)\mathrm{Diag}(\sigma _1, \sigma _2, \cdots )\right) _i = \sigma _i u_i$, 

\noindent while, from the transposed equality $A^t=V\Sigma ^t U^t $, we get $A^t u_i=\sigma _iv_i$.

So if $1\le i\le min\{m,n\}$, the columns $u_i$ and $v_i$ satisfy the conditions
  \begin{equation}\label{eq:singpair}Av_i=\sigma_i u_i ~~\textrm {and}\ ~~A^tu_i=\sigma_i v_i.\end{equation}

\begin{definition}\label{def:sing} The pairs  $(u_i, v_i)$ in (\ref{eq:singpair}) are called  singular vector pairs.\hfill

More precisely, if  $1\le i\le min\{m,n\}$, the vectors $u_i ~ and ~ v_i$ are called, respectively, left-singular and right-singular vectors for the singular value $ \sigma _i$. \hfill 

If the value $\sigma _i$ appears only once in $\Sigma$, then the corresponding pair $(u_i,v_i)$ is unique up to sign multiplication.
\end{definition}
\begin{remark} The right-singular vectors corresponding to zero singular values of $A$ span the kernel of $A$; they are the last $n-rk(A)$ columns of $V$.

The left-singular vectors corresponding to non-zero singular values of $A$ span the image of $A$; they are the first $rk(A)$ columns of $U$.
\end{remark}

\begin{remark}
The uniqueness property mentioned in Definition \ref{def:sing} shows that SVD of a general matrix is unique up to
simultaneous sign change in each pair of singular vectors  $u_i$ and $v_i$. With an abuse of notation, 
 it is customary to think projectively and to refer to
{\it ``the''} SVD of $A$, forgetting the sign change. See Theorem \ref{thm:free} for more about uniqueness.
\end{remark}

For later use, we observe that $U\Sigma _i V^t = \sigma _i u_i\cdot v_i^t$.

 
\vskip0.5truecm

Let $|| -||$ denote the usual $l^2$ norm (called also Frobenius or Hilbert-Schmidt norm) on $\M $, that is $\forall A \in \M \hfill\break
||A||:=\sqrt {tr (AA^t)}=\sqrt {\sum _{i,j} a_{ij}^2}$.  
Note that if $A=U\Sigma V^t$, then $||A||=\sqrt {\sum _i \sigma _i^2}$. 
\vskip0.5truecm

The Eckart-Young Theorem uses SVD of the matrix $A$ to find the matrices in $\M_r$
which minimize the distance from $A$.



\begin{theorem}[Eckart-Young, 1936]\label{thm:ey1936}\hfill\noindent

Let $A=U\Sigma V^t$ be the SVD of a matrix $A$. Then
\begin{itemize}
\item{}$U\Sigma_1 V^t$ is the best rank $1$ approximation of $A$, that is\hfill\break
$||A-U\Sigma_1 V^t||\le ||A-X||$ for every  matrix $X$ of rank $1$.

\item{}For any $1\le r\le rank(A)$, $U\Sigma_1 V^t+\ldots +U\Sigma_r V^t$ is the best rank $r$ approximation of $A$, that is
$||A-U\Sigma_1 V^t-\ldots -U\Sigma_r V^t||\le ||A-X||$ for every  matrix $X$
of rank $\le r$.

\end{itemize}
\end{theorem}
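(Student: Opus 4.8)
The plan is to prove the Eckart--Young theorem by reducing the approximation problem to the diagonal (pseudodiagonal) case, where it becomes transparent. Since the Frobenius norm is invariant under left and right multiplication by orthogonal matrices, for any matrix $X$ we have $\|A-X\| = \|U\Sigma V^t - X\| = \|\Sigma - U^t X V\|$, and $X$ has rank $\le r$ if and only if $U^t X V$ has rank $\le r$. Hence it suffices to show that among all matrices $Y$ of rank $\le r$, the one minimizing $\|\Sigma - Y\|$ is $Y = \mathrm{Diag}(\sigma_1,\ldots,\sigma_r,0,\ldots)$, with minimal value $\sqrt{\sigma_{r+1}^2 + \sigma_{r+2}^2 + \cdots}$. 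The rank $1$ case is just the special case $r=1$, so a single argument covers both bullets.

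For the reduced problem I would argue as follows. First, the stated candidate is achievable: $Y_0 := \mathrm{Diag}(\sigma_1,\ldots,\sigma_r,0,\ldots)$ has rank $\le r$ and $\|\Sigma - Y_0\|^2 = \sum_{i>r}\sigma_i^2$. For the lower bound, let $Y$ be any matrix of rank $\le r$, so its column space $W$ has $\dim W \le r$. The key observation is that $\|\Sigma - Y\|^2 = \sum_j \|\Sigma e_j - Y e_j\|^2 \ge \sum_j \mathrm{dist}(\Sigma e_j, W)^2 = \sum_j \|\sigma_j e_j - P_W(\sigma_j e_j)\|^2$, since the best approximation to each column $\Sigma e_j = \sigma_j e_j$ from within $W$ is its orthogonal projection. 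Thus it is enough to show $\sum_j \sigma_j^2\,\|e_j - P_W e_j\|^2 \ge \sum_{i>r}\sigma_i^2$ for every subspace $W$ of dimension $\le r$. Writing $c_j := \|P_W e_j\|^2 = \langle e_j, P_W e_j\rangle$, one has $0 \le c_j \le 1$ and $\sum_j c_j = \mathrm{tr}(P_W) = \dim W \le r$; we must show $\sum_j \sigma_j^2(1-c_j) \ge \sum_{i>r}\sigma_i^2$, equivalently $\sum_j \sigma_j^2 c_j \le \sum_{i\le r}\sigma_i^2$. Since the $\sigma_j^2$ are nonincreasing, $0\le c_j\le 1$, and $\sum_j c_j \le r$, the weighted sum $\sum_j \sigma_j^2 c_j$ is maximized by putting as much weight as possible on the largest $\sigma_j^2$, i.e. $c_1 = \cdots = c_r = 1$ and the rest $0$, giving exactly $\sum_{i\le r}\sigma_i^2$. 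This is an elementary rearrangement/Abel-summation estimate.

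Alternatively — and more in keeping with the paper's stated philosophy — one can phrase the geometry via the Terracini Lemma: $\M_r$ is (the cone over) the $r$-th secant variety of the Segre variety of rank-one matrices, and the rank-one matrices $U\Sigma_i V^t = \sigma_i\, u_i v_i^t$ are mutually orthogonal in the Frobenius inner product (because $\langle u_i v_i^t, u_j v_j^t\rangle = (u_i^t u_j)(v_i^t v_j) = \delta_{ij}$). The sum $\sum_{i\le r} U\Sigma_i V^t$ lies on $\M_r$, and the residual $A - \sum_{i\le r}U\Sigma_i V^t = \sum_{i>r}\sigma_i u_i v_i^t$ is orthogonal to the tangent space of $\M_r$ at that point, which by Terracini is spanned by tangent spaces to the Segre at the $u_i v_i^t$ for $i\le r$; orthogonality of the $u_i v_i^t$ for all $i$ against these tangent directions makes $\sum_{i\le r}U\Sigma_i V^t$ a critical point of the distance function, and the rearrangement computation above shows it is the global minimum.

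The main obstacle is the lower bound, and specifically the step that reduces an arbitrary rank-$\le r$ competitor to a statement purely about the singular values: once one knows $\|\Sigma - Y\|^2 \ge \sum_j \sigma_j^2(1 - \|P_W e_j\|^2)$, the remaining inequality $\sum_j \sigma_j^2 c_j \le \sum_{i\le r}\sigma_i^2$ under the constraints $0\le c_j\le 1$, $\sum_j c_j \le r$ is routine. I would be careful about the rectangular case (the index $j$ ranging over columns, with $\sigma_j = 0$ for $j > \min\{m,n\}$), and about the fact that the minimizer need not be unique when $\sigma_r = \sigma_{r+1}$ — the theorem only asserts that the displayed matrix is \emph{a} best approximation, which is all the argument delivers.
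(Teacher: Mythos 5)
Your argument is correct, but it follows a genuinely different route from the paper. The paper does not prove Theorem \ref{thm:ey1936} directly: it deduces it from the more general Theorem \ref{eyrevisited}, which uses the Terracini Lemma (via Example \ref{tangent}) together with Lemmas \ref{lemma1} and \ref{lemma2} to classify \emph{all} critical points of $d_A$ on the smooth locus $\M_r\setminus\M_{r-1}$ as the matrices $U(\Sigma_{i_1}+\cdots+\Sigma_{i_r})V^t$; the best rank $r$ approximation is then the critical point using the $r$ largest singular values, and as a by-product one gets the count $\binom{rk(A)}{r}$ of critical points (the ED degree). You instead prove global minimality directly: reduce by orthogonal invariance of the Frobenius norm to the pseudodiagonal case, bound $\|\Sigma-Y\|^2$ from below by $\sum_j\sigma_j^2\bigl(1-\|P_W e_j\|^2\bigr)$ where $W$ is the column space of the competitor, and finish with the constraint $\sum_j\|P_W e_j\|^2=\operatorname{tr}(P_W)=\dim W\le r$ plus a rearrangement estimate. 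All steps check out, including the handling of the rectangular case and the non-uniqueness caveat when $\sigma_r=\sigma_{r+1}$. What each approach buys: yours is more elementary and certifies the global minimum without needing to know that the minimizer is a critical point on the smooth stratum (a point the paper has to argue separately), while the paper's approach yields the complete list of critical points, which is exactly what is needed for the ED-degree discussion and the tensor generalizations later in the paper. One caution about your closing alternative sketch: orthogonality of the residual to the tangent space, via Terracini, only shows that $\sum_{i\le r}U\Sigma_iV^t$ is a \emph{critical} point; by itself it does not identify the global minimizer, so that variant still rests on your rearrangement estimate (or on the full classification of critical points as in the paper), as you implicitly acknowledge.
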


Among the infinitely many rank one decompositions available for matrices,
the Eckart-Young Theorem detects the one which is particularly nice in optimization problems.
We will prove Theorem \ref{thm:ey1936} in the more general formulation of Theorem \ref{eyrevisited}.

\subsection{Secant varieties and the Terracini Lemma}
Secant varieties give basic geometric interpretation of rank of matrices and
also of rank of tensors, as we will see in section \ref{sec:tensors}.

Let $\X\subset\PP V$ be an irreducible variety. 
The $k$-secant variety of $\X$ is defined by
\begin{equation}\label{eq:sigmak}\sigma_k(\X):=\overline{\bigcup_{p_1,\ldots ,p_k\in \X}\PP{Span}\left\{p_1,\ldots , p_k\right\} }
\end{equation}
where $\PP{Span}\left\{p_1,\ldots , p_k\right\}$ is the smallest projective linear space containing $p_1,\ldots , p_k$  and the overbar means
Zariski closure (which is equivalent to Euclidean closure in all cases considered in this paper).

There is a filtration
$\X=\sigma_1(\X)\subset\sigma_2(\X)\subset\ldots$

This ascending chain stabilizes when it fills the ambient space.

\vskip 0.5cm

\begin{example}[Examples of secant varieties in matrix spaces.]
\label{exa:secmat}

We may identify the space $\M$ of $m\times n$ matrices with the tensor product $\R^m\otimes\R^n$.
Hence we have natural inclusions $\M_r\subset \R^m\otimes\R^n$. Since $\M_r$ are cones,
with an abuse of notation we may call with the same name the associated projective variety
$\M_r\subset \PP(\R^m\otimes\R^n)$.
The basic equality we need is
$$\sigma_r(\M_1)=\M_r$$
which corresponds to the fact that any rank $r$ matrix can be written as the sum of $r$ rank one matrices.

In this case the Zariski closure in (\ref{eq:sigmak}) is not necessary,
since the union is already closed.

\end{example}

The Terracini Lemma (see \cite{Lan} for a proof) describes the tangent space ${\mathbb T}$
of a $k$-secant variety at a general point.

\begin{lemma}[Terracini Lemma]\label{lem:terracini}\hfill\break
Let $z\in\PP{Span}\left\{p_1,\ldots ,p_k\right\}$ be general.
Then $${\mathbb T}_{z}\sigma_k(\X)=\PP{Span}\left\{\T_{p_1}\X,\ldots ,\T_{p_k}\X\right\}.$$
\end{lemma}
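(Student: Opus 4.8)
The plan is to prove the Terracini Lemma by a direct parametrization argument. First I would set up the incidence variety. Let $\X \subset \PP V$ with affine cone $\hat\X \subset V$, and consider the rational map
$$\phi : \hat\X \times \cdots \times \hat\X \dashrightarrow V, \qquad \phi(x_1,\ldots,x_k) = x_1 + \cdots + x_k,$$
whose image has closure the affine cone over $\sigma_k(\X)$. The key point is that, since $\sigma_k(\X)$ is by definition the closure of this union of spans, for a \emph{general} $z \in \PP{Span}\{p_1,\ldots,p_k\}$ one can arrange $z = [x_1 + \cdots + x_k]$ with each $x_i$ a smooth point of $\hat\X$ lying over a smooth point $p_i$ of $\X$, and with $(x_1,\ldots,x_k)$ a smooth point of the domain at which $\phi$ has maximal rank. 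Then $\T_z \sigma_k(\X)$ (as an affine subspace of $V$, which I then projectivize) is the image of the differential $d\phi$ at $(x_1,\ldots,x_k)$.

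The next step is the chain-rule computation of that differential. Since $\phi$ is additive, $d\phi_{(x_1,\ldots,x_k)}(\xi_1,\ldots,\xi_k) = \xi_1 + \cdots + \xi_k$ where $\xi_i$ ranges over the tangent space $\T_{x_i}\hat\X$. Hence
$$\mathrm{Image}(d\phi) = \T_{x_1}\hat\X + \cdots + \T_{x_k}\hat\X,$$
and passing to projective space this is exactly $\PP{Span}\{\T_{p_1}\X, \ldots, \T_{p_k}\X\}$ (using that the affine tangent cone $\T_{x_i}\hat\X$ is the cone over the embedded projective tangent space $\T_{p_i}\X$, and that each $\T_{x_i}\hat\X$ contains the line through $x_i$, so the spans match up correctly). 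This gives the inclusion $\T_z\sigma_k(\X) \subseteq \PP{Span}\{\T_{p_1}\X,\ldots,\T_{p_k}\X\}$ automatically, for \emph{any} choice of smooth $p_i$; the reverse inclusion at a general $z$ is where one must work.

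The main obstacle, as usual with Terracini, is justifying that the differential has \emph{maximal} rank at a general point of the incidence variety, so that its image is genuinely the tangent space to (the cone over) $\sigma_k(\X)$ and not a proper subspace — equivalently, that the generic fiber dimension of $\phi$ is accounted for correctly. In characteristic zero (the only case relevant here, over $\R$ or $\C$) this follows from generic smoothness: the rank of $d\phi$ is maximal on a dense open subset of the domain, and on that open set the image of $d\phi$ has dimension $\dim \sigma_k(\X)$, forcing equality with the tangent space there. One then notes that a general point $z \in \PP{Span}\{p_1,\ldots,p_k\}$ for general $p_i$ is the image of a point in this dense open set, which yields the claimed equality. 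It remains only to check the compatibility between affine cones and projectivization — that taking $\PP{Span}$ of the affine tangent spaces $\T_{x_i}\hat\X$ reproduces $\PP{Span}\{\T_{p_i}\X\}$ — which is routine since each $\hat\X$ is a cone and hence $x_i \in \T_{x_i}\hat\X$. For the full details I refer to \cite{Lan}.
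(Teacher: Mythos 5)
The paper itself does not prove Terracini's Lemma (it simply cites \cite{Lan}), so your proposal stands on its own. Your overall strategy is the standard one and is sound: parametrize the cone over $\sigma_k(\X)$ by the addition map $\phi\colon\hat\X\times\cdots\times\hat\X\dashrightarrow V$, compute $\mathrm{Im}\,d\phi_{(x_1,\ldots,x_k)}=\T_{x_1}\hat\X+\cdots+\T_{x_k}\hat\X$, and use generic smoothness in characteristic zero to identify this image with the tangent space at a general point.

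There is, however, a real error in the middle of your argument: you have the two inclusions exactly backwards. Since $\phi$ maps $\hat\X\times\cdots\times\hat\X$ into the cone over $\sigma_k(\X)$, its differential at any point lands inside the Zariski tangent space of that cone; so what holds automatically, for arbitrary smooth points $p_i$ and any $z=[x_1+\cdots+x_k]$ in their span, is $\PP{Span}\left\{\T_{p_1}\X,\ldots,\T_{p_k}\X\right\}\subseteq\T_z\sigma_k(\X)$, not the containment you call automatic. The inclusion $\T_z\sigma_k(\X)\subseteq\PP{Span}\left\{\T_{p_1}\X,\ldots,\T_{p_k}\X\right\}$ is precisely the nontrivial direction: it can fail for special configurations (for instance when $z$ is a singular point of $\sigma_k(\X)$, where the Zariski tangent space jumps, or when special $p_i$ make the span of tangent spaces drop), and it is exactly what generic smoothness together with the generality of $p_1,\ldots,p_k$ and of $z$ buys. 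Your final paragraph does in fact deliver the equality (both inclusions at once): maximal rank of $d\phi$ on a dense open set gives $\dim\mathrm{Im}\,d\phi=\dim\sigma_k(\X)$, and since the image always sits inside $\T_z\sigma_k(\X)$, equality follows \emph{provided} you also observe that such a general $z$ is a smooth point of $\sigma_k(\X)$ — without that, the dimension count does not force equality. So the proof is repairable by swapping the roles of the two inclusions, adding the smoothness remark on $z$, and making explicit that the generality of the points $p_i\in\X$ (implicit in the statement) is genuinely needed, since for special $p_i$ only the easy containment survives.
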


\begin{example}[Tangent spaces to $\M _r$]\label{tangent}

The tangent space to $\M _1$ at a point $u\otimes v$ is $\R ^m \otimes v +u\otimes \R ^n$:

any curve $\gamma(t)= u(t)\otimes v(t)$ in $\M _1$ with $\gamma (0)=u\otimes v$
has derivative for $ t = 0$  given by $u'(0)\otimes v+u\otimes v'(0)$ and since $u'(0), v'(0)$ are arbitrary vectors in $\R ^m, \R^n$ respectively, we get  the thesis.

As we have seen in Example \ref{exa:secmat}, the variety $\M_r$ can be identified with the $r$-secant variety of $\M _1$, so the tangent space to $\M_r$ at a point\hfill\break
 $U(\Sigma _1 +\cdots +\Sigma _r )V^t$ can be described, by the Terracini Lemma,  as \hfill\break
$\T_{U\Sigma _1 V^t}\M _1 +\cdots +\T_{U\Sigma _r V^t}\M _1=
\T_{\sigma _1 u_1\otimes v_1^t}\M _1+ \cdots +\T_{\sigma _r u_r\otimes v_r^t}\M_1=\hfill\break
 (\R ^m\otimes v_1^t +u_1\otimes \R ^n)+\cdots + (\R ^m\otimes v_r^t +u_r\otimes \R ^n).$

\end{example}

\vskip 0.5cm

\subsection{A geometric perspective on the Eckart-Young Theorem}

Consider the variety $\M_r\subset \R^m\otimes\R^n$ of matrices of rank $\le r$  and for any matrix $A\in \R^m\otimes\R^n$
let $d_A(-)=d(A,-)\colon \M_r\to\R$ be the (Euclidean) distance function 
from $A$. If $\mathrm{rk}A\ge r$ then the minimum  on $\M_r$ of $d_A$ is achieved on some matrices of rank $r$.
This can be proved by applying the following Theorem \ref{eyrevisited} to $\M_{r'}$ for any $r'\le r$.
Since the variety $\M _ r $ is singular
exactly on $\M_{r-1}$, the minimum of $d_A$ can be found among the critical points of $d_A$ on the smooth part $\M_r\setminus{\M_{r-1}}$.


\begin{theorem}[Eckart-Young revisited]\cite[Example 2.3]{DHOST}\label{eyrevisited}\hfill\break
Let $A=U\Sigma V^t$ be the SVD of a matrix $A$  and let $1\le r\le rk(A)$.
All the critical points  of the distance function from $A$ to the (smooth) variety  $\M _ r\setminus\M_{r-1}$ 
  are given by $U(\Sigma_{i_1}+\ldots+\Sigma_{i_r})V^t$,
where $ \Sigma_i=\mathrm{Diag}(0,\ldots,0,\sigma_i,0,\ldots,0)$, with $1\le i\le rk(A)$.
If the nonzero singular values of $A$ are distinct then  the number of critical points is
${rk(A)\choose r}$.
\end{theorem}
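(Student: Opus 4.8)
The plan is to characterize the critical points of $d_A$ on the smooth locus $\M_r \setminus \M_{r-1}$ by the standard Lagrange/normal-space condition: a point $X \in \M_r \setminus \M_{r-1}$ is critical for $d_A$ precisely when the vector $A - X$ is orthogonal (with respect to the Frobenius inner product) to the tangent space $\T_X \M_r$. So the first step is to write $X = U(\Sigma_{i_1} + \cdots + \Sigma_{i_r})V^t$ for a candidate critical point (the sum of $r$ distinct rank-one pieces of the SVD of $A$), compute $A - X = U(\sum_{j \notin \{i_1,\ldots,i_r\}} \Sigma_j)V^t$, and verify orthogonality against the tangent space described via the Terracini Lemma in Example \ref{tangent}, namely $\T_X \M_r = \sum_{k=1}^r (\R^m \otimes v_{i_k}^t + u_{i_k} \otimes \R^n)$. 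Since the $u_i$ are orthonormal and the $v_i$ are orthonormal, the pairing of $U \Sigma_j V^t = \sigma_j u_j v_j^t$ against $\R^m \otimes v_{i_k}^t$ and against $u_{i_k} \otimes \R^n$ vanishes whenever $j \neq i_k$; this is a short trace computation. That shows each such $X$ is indeed a critical point.

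The second and more substantial step is the converse: every critical point arises this way. Here I would take an arbitrary critical point $X \in \M_r \setminus \M_{r-1}$, so $\mathrm{rk}\, X = r$, and use the orthogonality condition $A - X \perp \T_X \M_r$. Writing $X = \sum_{k=1}^r x_k \otimes y_k$ with $\{x_k\}$ spanning the column space and $\{y_k\}$ spanning the row space of $X$, the tangent space contains $\R^m \otimes y_k$ and $x_k \otimes \R^n$ for each $k$. Orthogonality of $A - X$ to $x_k \otimes \R^n$ for all $k$ forces the row space of $A - X$ to be orthogonal to the column space of $X$ paired appropriately; more cleanly, $(A-X)^t x_k = 0$ and $(A - X) y_k^{\vee} = 0$ type relations. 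The cleanest route is to deduce that $A - X$ and $X$ have orthogonal column spaces and orthogonal row spaces, hence $A = X + (A-X)$ is an orthogonal decomposition, and then $A^t A = X^t X + (A-X)^t(A-X)$ with the two summands acting on orthogonal complementary subspaces. This block structure shows that the column/row spaces of $X$ are spanned by singular vectors of $A$, i.e. $X$ is built from a subset of the rank-one SVD terms $\sigma_i u_i v_i^t$; and since $X$ must itself be a genuine rank-$r$ matrix lying in $\M_r \setminus \M_{r-1}$, exactly $r$ of them with nonzero $\sigma_i$ are used, giving $X = U(\Sigma_{i_1} + \cdots + \Sigma_{i_r})V^t$. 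The counting statement ${rk(A) \choose r}$ is then immediate when the nonzero $\sigma_i$ are distinct, since distinct index subsets give distinct matrices.

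The main obstacle I anticipate is making the converse argument fully rigorous without circularity: one must be careful that the orthogonality condition really pins down $X$ to be "diagonal in the SVD basis" rather than merely having the right column and row spaces. The key technical point is that $A - X \perp \R^m \otimes y_k$ and $A - X \perp x_k \otimes \R^n$ together say that, after changing to orthonormal bases adapted to the column and row spaces of $X$, the matrix $A$ is block-diagonal with $X$ occupying one block; then $X$ being critical and $A$ block-diagonal reduces the problem to the two blocks separately, and within the $X$-block $X$ must be a full-rank critical point for the distance from the corresponding block of $A$ — and one shows the only such critical point compatible with the off-diagonal vanishing is when that block of $A$ is already diagonalized by $X$, forcing $X$ to consist of whole SVD terms. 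Handling the possibility that $A$ has repeated or zero singular values (where the SVD basis is not unique) needs a remark but does not affect the set of critical points. Once the block-diagonalization is set up, the remaining computations are routine linear algebra with orthonormal bases and traces. \qed
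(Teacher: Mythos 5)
Your proposal is correct and follows essentially the same route as the paper: sufficiency by checking that $A-X$ is orthogonal to the Terracini description of $\T_X\M_r$ via the trace computation for rank-one matrices, and necessity by deducing that the column and row spaces of $X$ and $A-X$ are mutually orthogonal and then passing to orthonormal bases adapted to $X$ and $A-X$, in which $A$ becomes block-diagonal and $X$ is exhibited as a partial sum of SVD terms of $A$. Your intermediate remark via $A^tA=X^tX+(A-X)^t(A-X)$ is only a cosmetic variant of the paper's explicit completion of the singular vectors of $B$ and $A-B$ to orthogonal matrices, so no further comparison is needed.
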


Note that $U\Sigma_iV^t$ are all the critical
points of the distance function from $A$ to the variety $\M_1$ of rank one matrices.
So we have the important fact that all the critical points of the distance function
from $A$ to $\M_1$ allow to recover the SVD of $A$.

For the proof of Theorem \ref{eyrevisited} we need

\begin{lemma}\label{lemma1}If $A_1=u_1\otimes v_1$,  $A_2=u_2\otimes v_2$ are two rank one matrices, then
$<A_1, A_2>=<u_1,u_2><v_1, v_2>$. 
\end{lemma}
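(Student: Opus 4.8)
The plan is to reduce the inner product of two rank-one matrices to a simple computation using the trace form that defines the Frobenius inner product. Recall that for matrices $A_1, A_2 \in \M$, the inner product is $\langle A_1, A_2\rangle = \mathrm{tr}(A_1 A_2^t)$, consistent with the norm $\|A\| = \sqrt{\mathrm{tr}(AA^t)}$ introduced above. So the whole statement is the identity $\mathrm{tr}\bigl((u_1\otimes v_1)(u_2\otimes v_2)^t\bigr) = \langle u_1, u_2\rangle\,\langle v_1, v_2\rangle$, where we identify the rank-one matrix $u\otimes v$ with the outer product $u\,v^t$.

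First I would write $A_1 = u_1 v_1^t$ and $A_2 = u_2 v_2^t$, so that $A_2^t = v_2 u_2^t$ and hence $A_1 A_2^t = u_1 v_1^t v_2 u_2^t = (v_1^t v_2)\, u_1 u_2^t$, pulling out the scalar $v_1^t v_2 = \langle v_1, v_2\rangle$. Then I would take the trace: $\mathrm{tr}\bigl((v_1^t v_2)\,u_1 u_2^t\bigr) = \langle v_1, v_2\rangle\,\mathrm{tr}(u_1 u_2^t)$, and finally use the standard fact $\mathrm{tr}(u_1 u_2^t) = u_2^t u_1 = \langle u_1, u_2\rangle$ (the trace of an outer product is the corresponding inner product). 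Combining the two gives $\langle A_1, A_2\rangle = \langle u_1, u_2\rangle\,\langle v_1, v_2\rangle$, as claimed. An equally quick alternative is the purely index-based computation $\langle A_1, A_2\rangle = \sum_{i,j}(u_1)_i(v_1)_j(u_2)_i(v_2)_j = \bigl(\sum_i (u_1)_i(u_2)_i\bigr)\bigl(\sum_j (v_1)_j(v_2)_j\bigr)$, which factors the double sum into the product of the two inner products.

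There is no real obstacle here; the only thing to be careful about is the bookkeeping of which index is contracted by the trace versus the matrix multiplication, i.e.\ making sure the scalar $\langle v_1, v_2\rangle$ is the one extracted from $v_1^t v_2$ and $\langle u_1, u_2\rangle$ from $\mathrm{tr}(u_1 u_2^t)$. Since the Frobenius inner product is symmetric in its arguments and bilinear, the order of the factors on the right-hand side is immaterial, so the identity holds as stated.
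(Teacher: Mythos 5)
Your proof is correct and follows essentially the same route as the paper: both compute $\langle A_1,A_2\rangle=\mathrm{tr}(A_1A_2^t)$ for the outer products $u_iv_i^t$, and your index computation $\sum_{i,j}(u_1)_i(v_1)_j(u_2)_i(v_2)_j$ is exactly the paper's factorization of the double sum. The scalar-extraction phrasing $A_1A_2^t=(v_1^tv_2)\,u_1u_2^t$ is just a tidier way of organizing the same trace calculation.
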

\proof  $<A_1,A_2>=tr(A_1A_2^t) =\hfill\break
tr[\left( \left(\begin{array}{c}u_{11}\\ \vdots \\ u_{1m} \end{array} \right)\cdot (v_{11}, \cdots ,v_{1n})\right) 
\left( \left(\begin{array}{c}v_{21}\\ \vdots \\ v_{2n} \end{array}\right) \cdot (u_{21},\cdots ,u_{2m})\right)] =
\hfill\break
 \sum _i u_{1i}\left(\sum _k v_{1k}v_{2k}\right)u_{2i}=\sum _i u_{1i}u_{2i} \sum _k v_{1k}v_{2k}= <u_1,u_2><v_1,v_2>$.

\begin{lemma}\label{lemma2}
Let $B\in \M$.  If $<B,\R^m\otimes v>=0$, then $<\mathrm{Row}(B),v>=0$.

If $<B,u\otimes\R^n>=0$, then $<\mathrm{Col}(B), u>=0$.
\end{lemma}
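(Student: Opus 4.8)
The plan is to unwind the two inner products into coordinates and read off the conclusion. Recall that the inner product on $\M$ is $\langle X,Y\rangle = \mathrm{tr}(XY^t) = \sum_{i,j} x_{ij}y_{ij}$. First I would handle the first statement. The hypothesis $\langle B,\R^m\otimes v\rangle = 0$ means that $\langle B, e\otimes v\rangle = 0$ for every $e\in\R^m$; taking $e$ to run over the standard basis vectors $e_1,\ldots,e_m$ of $\R^m$, the matrix $e_i\otimes v$ is the one whose $i$th row is $v^t$ and whose other rows vanish, so $\langle B, e_i\otimes v\rangle = \sum_j b_{ij}v_j = \langle (\text{$i$th row of }B), v\rangle$. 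Hence the hypothesis says precisely that every row of $B$ is orthogonal to $v$, which is the assertion $\langle \mathrm{Row}(B), v\rangle = 0$, where $\mathrm{Row}(B)$ denotes the row space of $B$.

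The second statement is entirely symmetric: by the same argument applied to the columns, $\langle B, u\otimes e_j\rangle = \sum_i b_{ij}u_i = \langle (\text{$j$th column of }B), u\rangle$ for each standard basis vector $e_j$ of $\R^n$, so $\langle B, u\otimes\R^n\rangle = 0$ forces every column of $B$ to be orthogonal to $u$, i.e. $\langle \mathrm{Col}(B), u\rangle = 0$. Alternatively one can simply apply the first statement to $B^t$, since $\langle B, u\otimes\R^n\rangle = \langle B^t, \R^n\otimes u\rangle$ and the row space of $B^t$ is the column space of $B$.

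There is essentially no obstacle here: the only thing to be careful about is the bookkeeping in identifying $e_i\otimes v$ (resp. $u\otimes e_j$) with the correct rank-one matrix and matching the resulting sum with the intended inner product of vectors. One could also phrase the whole argument more slickly via Lemma \ref{lemma1}: writing the $i$th row of $B$ is not literally of rank-one form, but testing $B$ against $e_i\otimes v$ and invoking bilinearity of $\langle -,-\rangle$ gives the same coordinate identity. I would present the short coordinate computation, as it is the most transparent.
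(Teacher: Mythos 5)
Your proof is correct and follows essentially the same route as the paper: test $B$ against $e_i\otimes v$ for the standard basis vectors, recognize this pairing as the inner product of the $i$th row of $B$ with $v$, and conclude that the row space is orthogonal to $v$ (and symmetrically for columns). The paper phrases the computation via $\mathrm{tr}\left[B(v^t\otimes e_k^t)\right]$ rather than the coordinate sum, but the argument is the same.
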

\proof Let $\{ e_1,\cdots ,e_m\}$ be the canonical basis of $\R ^m$; then, by hypothesis,\hfill\break
 $<B, e_k\otimes v>=0 ~\forall k=1,\cdots ,m$. We have $0=tr\left[B (v^t\otimes e_k^t)\right] =\hfill\break
tr\left[B(0,\cdots ,0,v,0,\cdots ,0)\right] =
<B^k,v>$, where $B^k$ denotes the $k$th row of $B$, so that the space $\mathrm{Row}(B)$ is orthogonal to the vector $v$. \hfill\break
In a similar way, we get  $<\mathrm{Col}(B), u>=0$.
\vskip0.5truecm
By using Terracini Lemma \ref{lem:terracini} we can prove Theorem \ref{eyrevisited}.
\vskip0.5truecm

{\bf{Proof of Theorem \ref{eyrevisited}}}

The matrix $U (\Sigma _{i_1} + \cdots + \Sigma _{i_r}) V^t$ is a critical point of the distance function from $A$ to the variety $\M_{ r}$ if and only if the vector $A-(U (\Sigma _{i_1} + \cdots + \Sigma _{i_r})V^t)$ is orthogonal to the tangent space  [see \ref{tangent}]\hfill \break 
$ \T_{U (\Sigma _{i_1} + \cdots + \Sigma _{i_r}) V^t}\M _{ r}\, = 
 (\R ^m \otimes v_{i_1} ^t + u_{i_1} \otimes \R ^n) +\cdots +
(\R ^m \otimes v_{i_r} ^t + u_{i_r} \otimes \R ^n )$.

From the SVD of $A$ we have 
$A-(U (\Sigma _{i_1} + \cdots + \Sigma _{i_r})V^t) =
U(\Sigma _{j_1} + \cdots + \Sigma _{j_l})V^t=
\sigma _{j_1}u_{j_1}\otimes v_{j_1}^t +\, \cdots \, + \sigma _{j_l}u_{j_l}\otimes v_{j_l}^t $
 where $\{j_1, \cdots  , j_l\}$ is the set of indices given by the difference
$\{1, \cdots ,rk(A)\} \backslash \{i_1, \cdots , i_r\}$.

Let $\{ e_1, \cdots , e_m\} $ be the canonical basis of $\R ^m$. By Lemma $\ref{lemma1}$  we get:

$<\sigma _{j_h}u_{j_h}\otimes v_{j_h}^t, e_l\otimes v_{i_k}^t>=
 \sigma _{j_h} <u_{j_h},e_l><v_{j_h},v_{i_k}>=0$ since $v_{j_h},v_{i_k}$ are distinct columns of the orthogonal matrix $V$.  So the matrices $U\Sigma _{j_h}V^t$ are orthogonal to the spaces $\R ^m \otimes  v_{i_k}^t$.

 In a similar way, since $U$ is an orthogonal matrix,  the matrices $U\Sigma _{j_h}V^t$ are orthogonal to the spaces $  u_{i_k} \otimes \R ^n$. So  $A-(U (\Sigma _{i_1} + \cdots + \Sigma _{i_r})V^t)$ is orthogonal to the tangent space and $U (\Sigma _{i_1} + \cdots + \Sigma _{i_r})V^t$ is a critical point.

\def\niente1{Let now $B\in \M_{ r}$ be a critical point of the distance function from $A$ to $ \M_{ r}$. Then $A-B$ is orthogonal to the tangent space $\T_B \M _{ r}$. 

This implies by Lemma \ref{lemma2} that $Col(A-B)$ is orthogonal to $Col(B)$, hence $Col(B)\subset Col(A)$.
In the same way, again by Lemma \ref{lemma2}, we get $Row(B)\subset Row(A)$.
It is now straightforward to show that the critical point $B$ is of the desired type.
}

Let now $B\in \M_{ r}$ be a critical point of the distance function from $A$ to $ \M_{ r}$. Then $A-B$ is orthogonal to the tangent space $\T_B \M _{ r}$. 

Let $B=U' ( \Sigma '_1 +\cdots \Sigma '_r)V'^t, ~A-B= U'' ( \Sigma ''_1 +\cdots \Sigma ''_l)V''^t$ be SVD  of $B$ and $A-B$ respectively, with $\Sigma '_r \neq 0$ and $\Sigma ''_l \neq 0$.

Since $A-B$ is orthogonal to $\T_B \M_{ r} = 
 (\R^m \otimes {v'_1}^t + u'_1 \otimes \R ^n) +\cdots +
(\R ^m \otimes {v'_r} ^t + u'_r \otimes \R ^n )$, by Lemma $\ref{lemma2}$ we get
$<Col(A-B) , u'_k>=0 $ and $<Row(A-B), v'_k>=0~~ k=1,\cdots ,r$. In particular, $Col(A-B) $ is a vector subspace of $Span \{ u'_1, \cdots ,u'_r \}^{\perp }$ and has dimension at most $m-r$ while $Row(A-B)$ is a 
vector subspace of  $Span \{ v'_1, \cdots ,v'_r \}^{\perp }$ and has dimension at most $n-r$, so  that $l\le min\{m,n\}-r$.

From the equality 
$A-B= \left(u''_1,\ldots, u''_l,0\ldots,0\right) ( \Sigma ''_1 +\cdots \Sigma ''_l)V''^t$
we get $Col(A-B)\subset Span\{ u''_1,\ldots, u''_l\}$ and equality holds by dimensional reasons.

In a similar way, $Row(A-B)=Span \{ v''_1, \cdots , v''_l\}$. This implies that the orthonormal columns $u''_1, \cdots, u''_l, u'_1, \cdots ,u'_m$ can be completed with orthonormal $m-l-r$ columns of $\R ^m$ to obtain an orthogonal $m\times m$ matrix  $U$, while the orthonormal columns $v''_1, \cdots, v''_l,v'_1, \cdots ,v'_r$ can be completed with orthonormal $n-l-r$ columns of $\R ^n$ to obtain an orthogonal $n\times n$ matrix $V$.  

We get $A-B=U\left( \begin{array}{ccc}\Sigma '' &0&0\\ 0&0&0\\ 0&0&0\end{array} \right) V^t$, ~~
 $B=U\left( \begin{array}{ccc}0&0&0 \\ 0&\Sigma ' &0\\ 0&0&0\end{array} \right) V^t$, 
where \hfill\break
$\Sigma '' =\mathrm{Diag}(\sigma ''_1,\ldots , \sigma ''_l)$ and 
 $\Sigma ' =\mathrm{Diag}(\sigma '_1,\ldots , \sigma '_r)$. 

So $A=(A-B)+B=
U\left( \begin{array}{ccc}\Sigma ''&0&0 \\ 0&\Sigma ' &0\\ 0&0&0\end{array} \right) V^t$
can easily be transformed to a SVD of $A$ by just reordering the diagonal elements $\sigma '_i$'s and $\sigma ''_i$'s  and the critical point $B$ is of the desired type.\qed

\def\niente{
Let $B=U' ( \Sigma '_1 +\cdots \Sigma '_r)V'^t, ~A-B= U'' ( \Sigma ''_1 +\cdots \Sigma ''_l)V''^t$ be SVD of $B$ and $A-B$ respectively, with $\Sigma '_r \neq 0$ and $\Sigma ''_l \neq 0$.

Since $A-B$ is orthogonal to $\T_B \M_{ r} = 
 (\R^m \otimes {v'_1}^t + u'_1 \otimes \R ^n) +\cdots +
(\R ^m \otimes {v'_r} ^t + u'_r \otimes \R ^n )$, by Lemma $\ref{lemma2}$ we get
$<Col(A-B) , u'_k>=0 $ and $<Row(A-B), v'_k>=0~~ k=1,\cdots ,r$. In particular, $Col(A-B) $ is a vector subspace of $Span \{ u'_1, \cdots ,u'_r \}^{\perp }$ and has dimension at most $m-r$ while $Row(A-B)$ is a vector subspace of  $Span \{ v'_1, \cdots ,v'_r \}^{\perp }$ and has dimension at most $n-r$, so  that $l\le min\{m,n\}-r$.

The columns of the matrix $A-B$ are given by the product of the matrices 
$\left( \begin{array}{c}  u''_1,\cdots ,u''_m \end{array}\right)
\left( \begin{array}{cc} \mathrm {Diag}(\sigma ''_1, \cdots ,\sigma ''_l) &0 \\ 0&0 \end{array}\right)
\left( \begin{array}{c} ( v''_1)^t \\ \vdots \\ (v''_n)^t \end{array}\right) =\hfill\break
\left( \begin{array}{c}  u''_1,\cdots ,u''_m \end{array}\right)
\left(  \begin{array}{ccc} \sigma ''_1 (v''_1)_1 &\cdots & \sigma ''_1 (v''_1)_n\\ &\cdots & \\ 
 \sigma ''_l (v''_l)_1 &\cdots & \sigma ''_l (v''_l)_n \\ 0 &\cdots &0 \\ &\cdots & \\ 0&\cdots &0\end{array} \right)=
\left( \begin{array}{c}  u''_1,\cdots ,u''_l \end{array}\right)
\left(  \begin{array}{ccc} \sigma ''_1 (v''_1)_1 &\cdots & \sigma ''_1 (v''_1)_n\\ &\cdots & \\ 
 \sigma ''_l (v''_l)_1 &\cdots & \sigma ''_l (v''_l)_n \end{array} \right) $.
Since  $\sigma ''_1, \cdots , \sigma '' _l$ are nonzero and the vectors $v''_1, \cdots , v''_l$ are linearly independent, the second matrix has rank $=l$ and we have that $Col(A-B)=Span\{ u''_1, \cdots , u''_l\}$. 

In a similar way, $Row(A-B)=Span \{ v''_1, \cdots , v''_l\}$. This implies that the orthonormal columns $u''_1, \cdots, u''_l, u'_1, \cdots ,u'_r$ can be completed with orthonormal $m-l-r$ columns of $\R ^m$ to obtain an orthogonal $m\times m$ matrix  $U$ while the orthonormal columns $v''_1, \cdots, v''_l,v'_1, \cdots ,v'_r$ can be completed with orthonormal $n-l-r$ columns of $\R ^n$ to obtain an orthogonal $n\times n$ matrix $V$.  

We get $A-B=U\left( \begin{array}{ccc}\Sigma '' &0&0\\ 0&0&0\\ 0&0&0\end{array} \right) V^t$, ~~
 $B=U\left( \begin{array}{ccc}0&0&0 \\ 0&\Sigma ' &0\\ 0&0&0\end{array} \right) V^t$, 
where \hfill\break
$\Sigma '' =\mathrm{Diag}(\sigma ''_1,\ldots , \sigma ''_l)$ and 
 $\Sigma ' =\mathrm{Diag}(\sigma '_1,\ldots , \sigma '_r)$. 

So $A=(A-B)+B=
U\left( \begin{array}{ccc}\Sigma ''&0&0 \\ 0&\Sigma ' &0\\ 0&0&0\end{array} \right) V^t$
can be easily transformed in a SVD of $A$ just rearranging the diagonal elements $\sigma '_i$'s and $\sigma ''_i$'s in decreasing order and the critical point $B$ is of the desired type.}
\vskip 0.5cm

The following result has the same flavour of Eckart-Young Theorem \ref{eyrevisited}.

\begin{theorem}[Baaijens, Draisma]\cite[Theorem 3.2]{BD}\label{closestorthogonal}\hfill\break
Let $A=U\Sigma V^t$ be the SVD of a $n\times n$ matrix $A$. All the critical points  of the distance function from $A$ to the variety  $O(n)$
of orthogonal matrices are given by the  orthogonal matrices $U\mathrm{Diag}(\pm 1,\ldots, \pm 1)V^t$ and their number is
 $2^n$.
\end{theorem}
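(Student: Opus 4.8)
\noindent\textbf{Proof proposal for Theorem \ref{closestorthogonal}.}

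The plan is to follow the same strategy as in Theorem \ref{eyrevisited}. Since $O(n)$ is a smooth submanifold of the space $\M=\M_{n,n}$, a matrix $Q\in O(n)$ is a critical point of $d_A$ if and only if $A-Q$ is orthogonal, with respect to the trace inner product $\langle X,Y\rangle=\mathrm{tr}(XY^t)$, to the tangent space $\T_Q O(n)$. Differentiating $Q(t)Q(t)^t=I$ along a curve in $O(n)$ gives $\T_Q O(n)=\{QS : S^t=-S\}$, that is, $Q$ times the space of skew-symmetric matrices. Since
$$\langle A-Q,\,QS\rangle=\mathrm{tr}\big((A-Q)S^tQ^t\big)=-\mathrm{tr}\big(Q^t(A-Q)\,S\big),$$
and the orthogonal complement of the skew-symmetric matrices is the space of symmetric matrices, $Q$ is a critical point if and only if $Q^t(A-Q)=Q^tA-I$ is symmetric, i.e. if and only if $Q^tA=A^tQ$.

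Next I would diagonalize. Writing $Q=UWV^t$ with $W\in O(n)$ (possible because $U$ and $V$ are orthogonal), substituting $A=U\Sigma V^t$, and using $\Sigma^t=\Sigma$, the condition $Q^tA=A^tQ$ becomes $VW^t\Sigma V^t=V\Sigma W V^t$, i.e.
$$W^t\Sigma=\Sigma W.$$
Multiplying on the left by $W$ gives $\Sigma=W\Sigma W$, and multiplying that on the right by $W^t$ gives the companion identity $W\Sigma=\Sigma W^t$. Adding and subtracting the two identities $W^t\Sigma=\Sigma W$ and $W\Sigma=\Sigma W^t$ shows that $W+W^t$ commutes with $\Sigma$ while $W-W^t$ anticommutes with it. When the $n$ singular values of $A$ are distinct (equivalently $\sigma_1>\dots>\sigma_n\ge0$, so that $\sigma_i+\sigma_j>0$ for all $i\neq j$), comparing entries in the anticommutation relation forces $(\sigma_i+\sigma_j)(W-W^t)_{ij}=0$, hence $W=W^t$; then $2W=W+W^t$ commutes with the diagonal matrix $\Sigma$ with distinct entries, so $W$ is diagonal, and being orthogonal $W=\mathrm{Diag}(\pm1,\dots,\pm1)$. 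Thus every critical point has the form $Q=U\,\mathrm{Diag}(\pm1,\dots,\pm1)\,V^t$; conversely each such $Q$ is orthogonal and trivially satisfies $W^t\Sigma=\Sigma W$, hence is a critical point. The map $W\mapsto UWV^t$ is injective, so the $2^n$ sign patterns give $2^n$ distinct critical points.

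The routine parts are the tangent-space computation and the elementary manipulation of $W^t\Sigma=\Sigma W$; the delicate point is the genericity hypothesis hidden in the counting assertion. If two nonzero singular values coincide, or if $A$ has corank $\ge2$, the argument above only shows that the relevant block of $W$ is an arbitrary orthogonal matrix, producing a positive-dimensional family of critical points rather than finitely many. Accordingly, the equality with $2^n$ should be stated (as in Theorem \ref{eyrevisited}) under the assumption that the singular values of $A$ are distinct, and I would make this hypothesis explicit, or simply take $A$ invertible with distinct singular values, which is the case of interest.
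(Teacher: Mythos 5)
Your proof is correct; note that the paper itself does not prove Theorem \ref{closestorthogonal} at all, but quotes it from \cite[Theorem 3.2]{BD}, where it is obtained from a general study of Euclidean distance degrees of real algebraic groups. Your argument is the natural elementary analogue of the paper's proof of Theorem \ref{eyrevisited}: criticality of $Q$ means $A-Q\perp \T_QO(n)$, the tangent space is $\T_QO(n)=\{QS:\ S^t=-S\}$, orthogonality to it for the trace inner product is exactly symmetry of $Q^tA$, and conjugating by the fixed $U,V$ reduces everything to solving $W^t\Sigma=\Sigma W$ with $W\in O(n)$; the commutation/anticommutation splitting of $W\pm W^t$ then forces $W=\mathrm{Diag}(\pm1,\ldots,\pm1)$ once $\sigma_i+\sigma_j>0$ for $i\neq j$ and the $\sigma_i$ are distinct. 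All of these steps check out, and the converse direction and the injectivity of $W\mapsto UWV^t$ give exactly $2^n$ points. Your closing caveat is also accurate and worth keeping: as stated in the paper the theorem suppresses a genericity hypothesis, and the number $2^n$ is really the ED degree of $O(n)$, i.e.\ the count for generic $A$ (distinct singular values suffice); if two singular values coincide, or at least two vanish, one gets positive-dimensional families of critical points, and they are of the stated form only after passing to a different SVD of $A$. This is the same convention the paper uses in Theorem \ref{eyrevisited}, where the count ${rk(A)\choose r}$ likewise requires distinct nonzero singular values. Compared with \cite{BD}, your route buys a short, self-contained proof tightly linked to the SVD, while the group-theoretic approach of \cite{BD} yields the result uniformly for other algebraic groups.
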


Actually, in \cite{BD}, the result is stated in a slightly different form, which is equivalent to this one, that we have chosen
to make more transparent the link with SVD. It is easy to check that, among the critical points computed in Theorem \ref{closestorthogonal}, the one with all plus signs, corresponding to the orthogonal matrix $UV^t$, gives the orthogonal matrix closest to $A$. This is called the L\"owdin orthogonalization (or symmetric orthogonalization) of $A$.

\section{SVD via the Spectral Theorem}\label{sec:svdproof}
In this section we prove Theorem \ref{thm:svd} as a consequence of the Spectral Theorem.
We recall
\begin{theorem}  [Spectral Theorem]\hfill\break
For any symmetric real matrix $B$, there exists an orthogonal matrix $V$ such that $V^{-1}BV=V^tBV$  is a diagonal matrix.
\end{theorem}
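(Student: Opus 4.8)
The plan is to deduce the Spectral Theorem from the Fundamental Theorem of Algebra, by induction on the size $n$ of $B$, the case $n=1$ being trivial. The first step is to produce a single real eigenvalue together with a real eigenvector. The characteristic polynomial $\det(B-\lambda I)$ has a complex root $\lambda$, with a nonzero eigenvector $x\in\C^n$. To see that $\lambda\in\R$, observe that the scalar $q:=\bar x^t B x$ is real: since $B$ is real and symmetric, $\overline{q}=x^t B\bar x=(x^t B\bar x)^t=\bar x^t B^t x=\bar x^t Bx=q$. On the other hand $q=\bar x^t(\lambda x)=\lambda\|x\|^2$ with $\|x\|^2>0$, hence $\lambda$ is real. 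Since $\lambda$ and $B$ are both real, $\ker(B-\lambda I)$ contains a real unit vector $v_1$, so $Bv_1=\lambda v_1$.

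Next I would pass to the orthogonal complement $W=v_1^{\perp}\subset\R^n$, which is $B$-invariant: for $w\in W$ one has $\langle Bw,v_1\rangle=\langle w,B^tv_1\rangle=\langle w,Bv_1\rangle=\lambda\langle w,v_1\rangle=0$. Picking an orthonormal basis of the $(n-1)$-dimensional space $W$, the restriction $B|_W$ is represented by a matrix that is again symmetric (its $(i,j)$ entry $\langle Bw_j,w_i\rangle$ equals its $(j,i)$ entry by symmetry of $B$), so the inductive hypothesis yields an orthonormal basis $v_2,\ldots,v_n$ of $W$ made of eigenvectors of $B$. Then $v_1,\ldots,v_n$ is an orthonormal basis of $\R^n$; letting $V$ be the matrix whose columns are these vectors, $V$ is orthogonal and $V^tBV=V^{-1}BV$ is the diagonal matrix carrying the corresponding eigenvalues.

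The hard part --- in fact the only step with genuine content --- is the reality of the eigenvalue, which is precisely where one must leave the purely algebraic setting: it rests on the Fundamental Theorem of Algebra. Alternatively one can argue analytically, maximizing the Rayleigh quotient $x^tBx$ over the unit sphere (compact) and checking, via Lagrange multipliers, that a maximizer is an eigenvector of $B$; this replaces the first step and the induction then proceeds verbatim. Once a single real eigenpair is secured, the remainder is the routine invariant-subspace induction sketched above, with no further subtlety.
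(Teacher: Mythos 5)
Your proof is correct. Note that the paper itself does not prove the Spectral Theorem: it only recalls it in Section 3 as the known input from which the SVD (Theorem \ref{thm:svd}) is then derived, so there is no internal proof to compare against. Your argument is the standard one and is complete: the reality of an eigenvalue via the Hermitian scalar $\bar x^t Bx$, the passage to a real unit eigenvector $v_1$ (justified because $B-\lambda I$ is a real singular matrix, so its real kernel is nonzero), the $B$-invariance of $v_1^{\perp}$ using symmetry, and the induction on the dimension with the restricted matrix again symmetric in an orthonormal basis of $v_1^{\perp}$. The alternative route via maximizing the Rayleigh quotient on the unit sphere is likewise sound and avoids the Fundamental Theorem of Algebra in favor of compactness; either variant fully establishes the statement as used in the paper.
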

 
\begin{remark}\label{rmk:keraat}  Since the Euclidean inner product is positive definite, it is elementary to show that for any real $m\times n$ matrix $A$ we have
  $Ker (A^tA)=Ker (A)$ and $Ker (AA^t)=Ker (A^t)$.
\end{remark}

{\bf{Proof of Theorem \ref{thm:svd}}}\hfill\break
Let $A$ be an $m\times n$ matrix with real entries. The matrix $A^tA$ is a symmetric matrix of order $n$ and it's positive semidefinite.
By the Spectral Theorem, there exists an orthogonal matrix $V$ (of order $n$) such that
$$ V^{-1}(A^tA)V=V^t(A^tA)V=\begin{pmatrix} D &0 \\ 0&0 \end{pmatrix}$$
where $D$ is diagonal of order $r=\mathrm rk ( A^t A)=\mathrm rk (A)$ (see Remark \ref{rmk:keraat}) and is positive definite:
$D= \mathrm { Diag} (d_1, \cdots ,d_r)$  with  $d_1\ge d_2 \ge \cdots d_r >0$.

\noindent Let $v_1, \cdots ,v_n$ be the columns of $V$; then
$$ (A^tA)(v_1, \cdots ,v_n) =(v_1, \cdots ,v_n) \begin{pmatrix} D&0 \\ 0&0 \end{pmatrix} =
(d_1v_1, \cdots ,d_rv_r, 0, \cdots ,0)$$
and $v_{r+1}, \cdots ,v_n \, \in Ker (A^tA)=Ker (A)$ (see Remark \ref{rmk:keraat}).

\noindent Let $\sigma _i =\sqrt {d_i }, ~i=1, \cdots ,r$ and let $u_i=(1/\sigma _i)Av_i \in \R ^m$. These
vectors are orthonormal since  $ <u_i,u_j>=\frac 1 {\sigma _i \sigma _j}<Av_i,Av_j> = \frac 1 {\sigma _i \sigma _j}<v_i,A^tAv_j> = \break  \frac 1 {\sigma _i \sigma _j}<v_i,d_jv_j> = 
 \frac {\sigma _i} { \sigma _j}<v_i,v_j> =  \frac {\sigma _i} { \sigma _j}\delta _{ij}.$ 
 Thus it's possible to find $m-r$ orthonormal vectors in $\R ^m$ such that the matrix $U:=(u_1, \cdots ,u_r,u_{r+1}, \cdots ,u_m)$ is an $m\times m$ orthogonal matrix. Define $\Sigma := \begin{pmatrix}
D^{1/2} &0 \\ 0&0 \end{pmatrix}$ to be an $m\times n$ matrix with $m-r$ zero rows, $D^{1/2}=\mathrm {Diag} (\sigma _1, \cdots ,\sigma_r)$. Then
$$ U\Sigma V^t= \left( \frac 1{\sigma _1}Av_1, \cdots ,\frac 1{\sigma _r}Av_r, u_{r+1}, \cdots ,
u_m \right) \begin{pmatrix} \sigma _1 v_1^t \\ \vdots \\  \sigma _r v_r^t \\ 0\\ \vdots \\0 \end{pmatrix} =
A(v_1, \cdots ,v_r)\begin{pmatrix} v_1^t \\ \vdots \\ v_r^t \end{pmatrix}. $$

Since $V$ is orthogonal we have 
$$I_n=\begin{pmatrix}v_{1} &\cdots & v_n \end{pmatrix} \begin{pmatrix} v_{1}^t \\ \vdots
\\ v_n^t \end{pmatrix}=\begin{pmatrix}v_{1} &\cdots & v_r \end{pmatrix} \begin{pmatrix} v_{1}^t \\ \vdots
\\ v_r^t \end{pmatrix}+\begin{pmatrix}v_{r+1} &\cdots & v_n \end{pmatrix} \begin{pmatrix} v_{r+1}^t \\ \vdots
\\ v_n^t \end{pmatrix}$$

Hence we get 
$$ U\Sigma V^t=A\left( I_n - \begin{pmatrix}v_{r+1} &\cdots & v_n \end{pmatrix} \begin{pmatrix} v_{r+1}^t \\ \vdots
\\ v_n^t \end{pmatrix} \right) = A$$ 
since $v_{r+1}, \cdots ,v_n \, \in Ker(A)$.\hfill\qed

\begin{lemma} \label{lem:facts}
\noindent

\begin{itemize}
\item{(1)} Let $\sigma _1^2 >\cdots >\sigma _k ^2 >0$ be the distinct non zero eigenvalues of $A^tA$ and $V_i=Ker(A^tA-\sigma_i^2I_n) $ be the corresponding eigenspaces,

$V_0=Ker (A^tA)=Ker (A)$. Then
$$\R^n = \left(\oplus _{i=1}^k V_i\right) \oplus V_0$$    is an orthogonal decomposition of $ \R^n $.

\item{(2)} Let  $U_i=Ker(AA^t-\sigma_i^2I_m) $ ,  $U_0=Ker (AA^t)=Ker (A^t)$.
Then $$AV_i=U_i\textrm{\ \ and\ \ }A^tU_i=V_i\textrm{\ if\ }i=1, \cdots ,r.$$

\item{(3)} 
\hskip 4cm{$\R^m = \left(\oplus _{i=1}^k U_i\right) \oplus U_0$}\hfill
\vskip 0.2cm\noindent
 is   an orthogonal decomposition of $ \R^m $ and
$\sigma _1^2 >\cdots >\sigma _k ^2 >0$ are the distinct non zero eigenvalues of $AA^t$.

\item{(4)} The isomorphism 
$\frac 1{\sigma _i} A|_{V_i}: V_i \longrightarrow U_i$ is an isometry with inverse $\frac 1{\sigma _i} A^t |_{U_i}:U_i \longrightarrow V_i $.
\end{itemize}
\end{lemma}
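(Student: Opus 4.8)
The plan is to reduce everything to the Spectral Theorem applied to the two symmetric matrices $A^tA$ and $AA^t$, together with the elementary observation that $A$ carries an eigenvector of $A^tA$ to an eigenvector of $AA^t$ with the same eigenvalue, and $A^t$ does the reverse. For part (1), I would simply apply the Spectral Theorem to the symmetric matrix $A^tA$: distinct eigenspaces of a real symmetric matrix are mutually orthogonal and together span the ambient space, and $Ker(A^tA)=Ker(A)$ by Remark \ref{rmk:keraat}; collecting the zero eigenspace as $V_0$ gives the stated orthogonal decomposition $\R^n=(\oplus_{i=1}^k V_i)\oplus V_0$.

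For part (2), the key computation is that if $v\in V_i$ then $AA^t(Av)=A(A^tAv)=\sigma_i^2\,Av$, so $AV_i\subseteq U_i$, and symmetrically $A^tU_i\subseteq V_i$. To promote these inclusions to equalities I would use that $\sigma_i\neq 0$: applying $A^t$ gives $V_i=\sigma_i^{-2}(A^tA)V_i\subseteq A^tU_i\subseteq V_i$, hence $A^tU_i=V_i$, and then applying $A$ gives $U_i=\sigma_i^{-2}(AA^t)U_i\subseteq AV_i\subseteq U_i$, hence $AV_i=U_i$.

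For part (3), I would first note that $A|_{V_i}$ is injective (if $v\in V_i$ and $Av=0$ then $\sigma_i^2v=A^tAv=0$, so $v=0$), whence by (2) we get $\dim U_i=\dim V_i\geq 1$; in particular each $\sigma_i^2$ really is a nonzero eigenvalue of $AA^t$. Conversely, if $AA^tu=\mu^2 u$ with $u\neq 0$ and $\mu^2>0$, then $A^tu\neq 0$ and $(A^tA)(A^tu)=\mu^2(A^tu)$, so $\mu^2$ is one of the $\sigma_i^2$. Hence $\sigma_1^2>\cdots>\sigma_k^2$ are exactly the nonzero eigenvalues of $AA^t$, with eigenspaces $U_i$, and the Spectral Theorem applied to $AA^t$ yields the orthogonal decomposition $\R^m=(\oplus_{i=1}^k U_i)\oplus U_0$ with $U_0=Ker(AA^t)=Ker(A^t)$.

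For part (4), injectivity of $A|_{V_i}$ together with $\dim U_i=\dim V_i$ and $AV_i=U_i$ shows that $\frac{1}{\sigma_i}A|_{V_i}\colon V_i\to U_i$ is a linear isomorphism; it is an isometry because $<Av,Aw>=<v,A^tAw>=\sigma_i^2<v,w>$ for $v,w\in V_i$ (the same computation already used in the proof of Theorem \ref{thm:svd}), and $\frac{1}{\sigma_i^2}(A^tA)|_{V_i}=\mathrm{id}_{V_i}$ identifies $\frac{1}{\sigma_i}A^t|_{U_i}$ as the inverse. All the computations here are short; the only point that needs a little attention — the main obstacle, such as it is — is passing from the inclusions in (2) to the equalities $AV_i=U_i$ and $A^tU_i=V_i$, and then using them in (3) to be sure the $\sigma_i^2$ exhaust the nonzero spectrum of $AA^t$, so that no eigenspace of $AA^t$ is missed in the decomposition.
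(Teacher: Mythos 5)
Your proposal is correct and follows essentially the same route as the paper: part (1) is the Spectral Theorem, part (2) is the same intertwining computation $(AA^t)(Av)=A(A^tA)v=\sigma_i^2Av$ with the inclusions upgraded to equalities by dividing by $\sigma_i^2$, and parts (3)--(4), which the paper declares immediate from (2), you simply spell out (injectivity of $A|_{V_i}$, exhaustion of the nonzero spectrum of $AA^t$, and the isometry check $<Av,Aw>=\sigma_i^2<v,w>$). The extra detail you supply is accurate and fills in exactly what the paper leaves to the reader.
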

\proof
(1) is the Spectral Theorem.
In order to prove (2), $AV_i \subseteq U_i$ since $\forall w\in V_i$ one has $(AA^t)(Aw)=A(A^tA)w =\sigma _i^2 Aw$. 
In a similar way, $A^tU_i\subseteq V_i$.  On the other hand, $\forall z\in U_i $ one has $ z= \frac 1 {\sigma _i^2}(AA^t)z = A(  \frac 1 {\sigma _i^2}A^tz) \in AV_i$ so that $AV_i=U_i$. In a similar way, $A^tU_i=V_i$.
(3) and (4) are immediate from (2). 
\vskip 0.5cm

Lemma \ref{lem:facts} may be interpretated as the following coordinate free version of SVD, that shows precisely in which sense SVD is unique.

\begin{theorem}[Coordinate free version of SVD]\label{thm:free}

\noindent

Let ${\V}$, ${\U}$ be real vector spaces of finite dimension endowed with inner products $<,>_{\V}$ and $<,>_{\U}$ and let $F\colon \V\to \U$ be a linear map with adjoint $F^t\colon \U\to \V$, defined by the property
$<Fv,u>_{\U}=<v,F^tu>_{\V}$ $\forall v\in \V, \forall u\in \U$.
Then there is a unique decomposition (SVD)
$$F=\sum_{i=1}^k\sigma_iF_i\qquad$$
 with $ \sigma_1> \ldots >\sigma_k> 0$, $F_i\colon \V\to \U$ linear maps such that
\begin{itemize}
\item{}$F_iF_j^t$ and $F_i^tF_j$ are both zero for any $i\neq j$, 
\item{}${F_i}_{|Im(F_i^t)}\colon Im(F_i^t)\to Im(F_i)$ is an isometry with inverse $F_i^t$.
\end{itemize}
Both the singular values $\sigma_i$ and
the linear maps $F_i$ are uniquely determined from $F$.

\end{theorem}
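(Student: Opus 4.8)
The plan is to obtain existence directly from Lemma \ref{lem:facts} and to derive uniqueness by identifying any decomposition of the stated shape with the spectral decomposition of $F^tF$.

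\emph{Existence.} Lemma \ref{lem:facts} is coordinate free once $A^t$ is read as the adjoint, so I apply it to $F$: let $\sigma_1^2>\cdots>\sigma_k^2>0$ be the distinct nonzero eigenvalues of $F^tF$, let $V_i=\ker(F^tF-\sigma_i^2\,\mathrm{id})\subseteq\V$ and $U_i=\ker(FF^t-\sigma_i^2\,\mathrm{id})\subseteq\U$. Then the $V_i$ (resp. $U_i$) are mutually orthogonal, $\V=(\oplus_iV_i)\oplus\ker F$, and $\frac1{\sigma_i}F|_{V_i}\colon V_i\to U_i$ is an isometry with inverse $\frac1{\sigma_i}F^t|_{U_i}$. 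Define $F_i\colon\V\to\U$ to be the composite of the orthogonal projection $\V\to V_i$, the map $\frac1{\sigma_i}F|_{V_i}\colon V_i\to U_i$, and the inclusion $U_i\hookrightarrow\U$. Because $F$ acts as $\sigma_i$ times an isometry onto $U_i$ on each $V_i$ and as $0$ on $\ker F$, one gets $F=\sum_i\sigma_iF_i$. Dualizing (the adjoint of an orthogonal projection is the corresponding inclusion) shows $F_i^t$ is the composite of the projection $\U\to U_i$, the map $\frac1{\sigma_i}F^t|_{U_i}$, and the inclusion $V_i\hookrightarrow\V$; hence $Im(F_i)=U_i$, $Im(F_i^t)=V_i$, and $F_i|_{V_i}=\frac1{\sigma_i}F|_{V_i}$ is an isometry onto $U_i$ with inverse $F_i^t$, which is the second bullet. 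For $i\neq j$ the maps $F_j$ and $F_j^t$ land in $U_j\perp U_i$ and $V_j\perp V_i$ respectively, which are killed by the projections onto $U_i$, $V_i$ occurring inside $F_i^t$, $F_i$; thus $F_i^tF_j=0$ and $F_iF_j^t=0$.

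\emph{Uniqueness.} Let $F=\sum_{i=1}^k\sigma_iF_i$ be any decomposition with the listed properties and put $W_i:=Im(F_i^t)\subseteq\V$. Since $\ker F_i=(Im F_i^t)^\perp=W_i^\perp$, the second bullet (which gives $F_i^tF_i=\mathrm{id}$ on $W_i$) forces $F_i^tF_i$ to equal the orthogonal projection $P_{W_i}$ onto $W_i$. The vanishing $F_iF_j^t=0$ for $i\neq j$ says $W_j=Im(F_j^t)\subseteq\ker F_i=W_i^\perp$, so $W_1,\dots,W_k$ are mutually orthogonal. Therefore
$$F^tF=\sum_{i,j}\sigma_i\sigma_j\,F_i^tF_j=\sum_i\sigma_i^2\,F_i^tF_i=\sum_i\sigma_i^2\,P_{W_i},$$
and since $\sigma_1^2>\cdots>\sigma_k^2>0$ and the $W_i$ are mutually orthogonal this is precisely the spectral decomposition of the positive semidefinite operator $F^tF$. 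Hence $k$, the numbers $\sigma_i$, and the subspaces $W_i$ are determined by $F$ ($\sigma_i^2$ being the nonzero eigenvalues of $F^tF$ and $W_i$ the corresponding eigenspaces). Finally, for $j\neq i$ we have $W_j\subseteq W_i^\perp=\ker F_i$, so $F|_{W_i}=\sum_j\sigma_jF_j|_{W_i}=\sigma_iF_i|_{W_i}$ and thus $F_i|_{W_i}=\frac1{\sigma_i}F|_{W_i}$, while $F_i$ vanishes on $W_i^\perp=\ker F_i$; as $\V=W_i\oplus W_i^\perp$, this determines $F_i$.

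The bookkeeping with adjoints of inclusions and projections and the verification of the two bullets for the $F_i$ coming from Lemma \ref{lem:facts} are routine. The one genuinely load-bearing step is in the uniqueness proof: turning the abstract orthogonality relations $F_i^tF_j=F_iF_j^t=0$ into honest orthogonality of the subspaces $Im(F_i^t)$, so that $F^tF=\sum_i\sigma_i^2P_{W_i}$ can be matched with the Spectral Theorem; once that is in place, uniqueness of both the $\sigma_i$ and the $F_i$ is immediate.
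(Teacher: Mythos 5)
Your proof is correct and takes essentially the same route as the paper: existence is read off from Lemma \ref{lem:facts}, and uniqueness comes from recognizing $F^tF=\sum_i\sigma_i^2F_i^tF_i=\sum_i\sigma_i^2P_{W_i}$ as the spectral decomposition of the self-adjoint operator $F^tF$. You merely fill in the details (that $F_i^tF_i$ is the orthogonal projection onto $Im(F_i^t)$, that these images are mutually orthogonal, and how each $F_i$ is recovered from $F$ and $W_i$) which the paper leaves as a sketch.
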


By taking the adjoint in Theorem \ref{thm:free}, $F^t=\sum_{i=1}^k\sigma_iF_i^t$ is the SVD of $F^t$.

The first interesting consequence is that
$$FF^t=\sum_{i=1}^k\sigma_i^2F_iF_i^t\textrm{\ \ \ and\ \ \ }
F^tF=\sum_{i=1}^k\sigma_i^2F_i^tF_i$$ are both spectral decomposition (and SVD) of the self-adjoint operators $FF^t$ and $F^tF$. This shows the uniqueness in Theorem \ref{thm:free}.
Note that $\V=\left(\oplus_{i=1}^k Im(F_i^t)\right)\bigoplus Ker F$
and $\U=\left(\oplus_{i=1}^k Im(F_i)\right)\bigoplus Ker F^t$ are both orthogonal decompositions and that $\mathrm{rk}F=\sum_{i=1}^k\mathrm{rk}F_i$.

Moreover, $F^+=\sum_{i=1}^k\sigma_i^{-1}F_i^t$ is the Moore-Penrose inverse of $F$,
expressing also the SVD of $F^+$.

Theorem \ref{thm:free} extends in a straightforward way to finite dimensional complex vector spaces $\V$ and $\U$ endowed with Hermitian inner products.

\section{Basics on tensors and tensor rank }\label{sec:tensors}
We consider tensors $A\in K^{n_1+1}\otimes\ldots\otimes K^{n_d+1}$
where  $K=\R$ or $\C$. It is convenient to consider complex tensors even if one is interested only in the real case. 


\begin{figure}[h]
\begin{center}
\includegraphics[width=30mm]{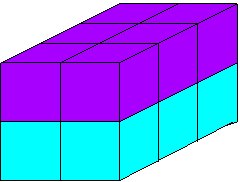}
\caption{The visualization of a tensor in $K^3\otimes K^2\otimes K^2$.}
\label{fig:322}
\end{center}
\end{figure}

Entries of $A$ are labelled by $d$ indices as $a_{i_1\ldots i_d}$.

For example,
the expression in coordinates of a $3\times 2\times 2$ tensor $A$ as in Figure \ref{fig:322} is,  with obvious notations,

\begin{equation*}
\begin{aligned}
A=a_{000}x_0y_0z_0+a_{001}x_0y_0z_1+a_{010}x_0y_1z_0+a_{011}x_0y_1z_1&+\\
a_{100}x_1y_0z_0+a_{101}x_1y_0z_1+a_{110}x_1y_1z_0+a_{111}x_1y_1z_1&+\\
a_{200}x_2y_0z_0+a_{201}x_2y_0z_1+a_{210}x_2y_1z_0+a_{211}x_2y_1z_1.
\end{aligned}
\end{equation*}

\begin{definition}A tensor $A$ is {\it decomposable} if there exist
$x^i\in K^{n_i+1}$, for $i=1,\ldots , d$,
such that
$a_{i_1\ldots i_d}=x^1_{i_1}x^2_{i_2}\ldots x^d_{i_d}$. In equivalent way,
$A=x^1\otimes\ldots\otimes x^d$.
\end{definition}


Define the rank of a tensor $A$ as the minimal number of decomposable summands
expressing $A$, that is
$$\mathrm{rk}(A):=\min\{r|A=\sum_{i=1}^rA_i, A_i\textrm{\ are decomposable}\}$$

For matrices, this coincides with usual rank.
For a (nonzero) tensor, decomposable  $\Longleftrightarrow$ rank one.

Any expression $A=\sum_{i=1}^rA_i$ with $A_i$ decomposable is called a {\it tensor decomposition}.

As for matrices, the space of tensors of format $(n_1+1)\times\ldots\times(n_d+1)$
has a natural filtration with subvarieties

$\cT_r=\{A\in K^{n_1+1}\otimes\ldots\otimes K^{n_d+1}|\textrm{\ rank }(A)\le r\}$.
We have
$$\cT_1\subset \cT_2\subset\ldots$$

Corrado Segre in XIX century understood this filtration
 in terms of projective geometry, since $\cT_i$ are cones.

The decomposable (or rank one) tensors give the ``Segre variety''

$$\begin{array}{ccc}
\cT_1\simeq\PP^{n_1}\times\ldots\times\PP^{n_d}&\subset&\PP(K^{n_1+1}\otimes\ldots\otimes K^{n_d+1})
\end{array}$$

The variety $\cT_k$ is again the $k$-secant variety of $\cT_1$, like in the case of matrices.

\vskip 0.5cm

For $K=\R$, the Euclidean inner product on each space $\R^{n_i+1}$ induces the inner product on the tensor product
$\R^{n_1+1}\otimes\ldots\otimes \R^{n_d+1}$ (compare with Lemma \ref{lemma1}).
With respect to this product we have the equality
$||x_1\otimes\ldots\otimes x_d||^2=\prod_{i=1}^d||x_i||^2$. 
A {\it best rank $r$ approximation} of a real tensor $A$ is a tensor in $\cT_r$ which minimizes the $l^2$-distance function from $A$. 
We will discuss mainly the best rank one approximations of $A$,
considering the  critical
points $T\in\cT_1$ for the $l^2$-distance function from $A$ to the variety $\cT_1$ of rank $1$ tensors,
trying to extend what we did in \S \ref{sec:svdey}. The condition 
that $T$ is a critical point is again that the tangent space at $T$ is orthogonal to the tensor $A-T$.

\vskip 0.5cm


\begin{theorem}[Lim, variational principle]\cite{Lim05}\label{thm:lim}

\noindent
The critical points $x_1\otimes\ldots\otimes x_d\in\cT_1$ of the distance function from $A\in\R^{n_1+1}\otimes\ldots\otimes\R^{n_d+1}$ 
to the variety $\cT_1$ of rank $1$ tensors are given by $d$-tuples
$(x_1,\ldots,x_d)\in \R^{n_1+1}\times\ldots\times\R^{n_d+1}$ such that
\begin{equation}\label{eq:singdple}
A\cdot(x_1\otimes\ldots\widehat{x_i}\ldots\otimes x_d)=\lambda x_i\quad\forall i=1,\ldots ,d\end{equation}
where $\lambda\in\R$, the dot  means contraction and the notation $\widehat{x_i}$ means that $x_i$ has been removed.
\end{theorem}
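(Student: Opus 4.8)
The plan is to argue exactly as in the matrix case of Theorem~\ref{eyrevisited}: on the smooth locus $\cT_1\setminus\{0\}$ of the Segre cone, a point $T=x_1\otimes\cdots\otimes x_d$ is a critical point of the distance function $d_A$ precisely when $A-T$ is orthogonal to the tangent space $\T_T\cT_1$; so I would first compute that tangent space and then unwind the orthogonality condition one slot at a time. (Alternatively one can apply Lagrange multipliers directly to $\|A-x_1\otimes\cdots\otimes x_d\|^2$ — this is the ``variational principle'' reading — but the geometric route fits the theme of the paper and reuses the ingredients already developed for matrices.)

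The tangent space is obtained by the curve computation of Example~\ref{tangent}: a curve $\gamma(t)=w_1(t)\otimes\cdots\otimes w_d(t)$ with $\gamma(0)=T$ has $\gamma'(0)=\sum_{i=1}^d x_1\otimes\cdots\otimes w_i'(0)\otimes\cdots\otimes x_d$, and $w_i'(0)$ ranges over all of $\R^{n_i+1}$, so
$$\T_T\cT_1=\sum_{i=1}^d\, x_1\otimes\cdots\otimes\R^{n_i+1}\otimes\cdots\otimes x_d,$$
the factor $\R^{n_i+1}$ sitting in the $i$-th slot. Hence $T$ is a critical point iff, for every $i$ and every $y\in\R^{n_i+1}$, the tensor $A-T$ is orthogonal to $x_1\otimes\cdots\otimes y\otimes\cdots\otimes x_d$. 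Here I would invoke two elementary facts: the multiplicativity of the induced inner product (the many-factor analogue of Lemma~\ref{lemma1}), which gives $\langle x_1\otimes\cdots\otimes x_d,\;x_1\otimes\cdots\otimes y\otimes\cdots\otimes x_d\rangle=\big(\prod_{j\neq i}\|x_j\|^2\big)\langle x_i,y\rangle$; and the coordinate identity $\langle A,\;x_1\otimes\cdots\otimes y\otimes\cdots\otimes x_d\rangle=\langle A\cdot(x_1\otimes\cdots\widehat{x_i}\cdots\otimes x_d),\,y\rangle$, which is simply the definition of the contraction read off on the canonical bases. Equating the two and letting $y$ vary, the orthogonality condition in slot $i$ becomes
$$A\cdot(x_1\otimes\cdots\widehat{x_i}\cdots\otimes x_d)=\Big(\prod_{j\neq i}\|x_j\|^2\Big)\,x_i,\qquad i=1,\ldots,d,$$
which is the system (\ref{eq:singdple}) once we name the coefficient $\lambda$.

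The only point that needs real attention is the bookkeeping of scalars: a nonzero decomposable tensor $T$ does not determine its factors, which may be rescaled as $x_i\mapsto t_ix_i$ with $\prod_i t_i=1$, and this changes each coefficient $\prod_{j\neq i}\|x_j\|^2$. Requiring the $d$ coefficients produced above to coincide — equivalently, normalizing so that $\|x_1\|=\cdots=\|x_d\|$ (take $t_i=(\prod_j\|x_j\|)^{1/d}/\|x_i\|$) — pins down the scale and collapses the system to a single $\lambda\ge 0$, with $\lambda=\|x_i\|^{2(d-1)}=\|T\|^{2(d-1)/d}$; for this balanced choice of representatives the equations (\ref{eq:singdple}) are exactly the criticality condition, so that the critical points of $d_A$ on $\cT_1$ are precisely the tensors admitting such a decomposition. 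I do not expect any deeper obstacle: beyond this scalar normalization, the proof is the routine verification of the tangent-space formula and of the two inner-product identities, the second of which is just Lemma~\ref{lemma1} with $d$ factors.
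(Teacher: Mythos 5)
Your argument is correct and is essentially the route the paper itself sets up: the paper gives no proof of Theorem~\ref{thm:lim} (it simply cites Lim), but it states right before the theorem that criticality of $T\in\cT_1$ means $A-T$ is orthogonal to $\T_T\cT_1$, and your tangent-space computation together with the $d$-factor version of Lemma~\ref{lemma1} carries this out exactly in the spirit of Example~\ref{tangent} and Theorem~\ref{eyrevisited}. Your balancing $\|x_1\|=\cdots=\|x_d\|$, which collapses the coefficients $\prod_{j\neq i}\|x_j\|^2$ to a single $\lambda=\|T\|^{2(d-1)/d}$, is precisely the normalization the paper invokes in the remark following the theorem, so no gap remains.
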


Note that the left-hand side of (\ref{eq:singdple}) is an element in the dual space of $\R^{n_i+1}$, so in order for (\ref{eq:singdple}) to be meaningful it is necessary to have a metric identifying $\R^{n_i+1}$
with its dual. We may normalize the factors $x_i$ of the tensor product $x_1\otimes\ldots\otimes x_d$ in such a way that $||x_i||^2$ does not depend on $i$.
Note that from (\ref{eq:singdple}) we get $A\cdot(x_1\otimes\ldots\otimes x_d)=\lambda ||x_i||^2$.
Here, $(x_1,\ldots,x_d)$ is called a {\it singular vector $d$-tuple} (defined independently by Qi in \cite{Qi07}) and $\lambda$ is called a {\it singular value}. 
Allowing complex solutions to  (\ref{eq:singdple}) , $\lambda$ may be complex.
\vskip 0.5cm

\begin{example}\label{exa:f}
We may compute all singular vector triples for the following tensor in $\R^3\otimes\R^3\otimes\R^2$
{\scriptsize\begin{equation*}\label{eq:f}
\begin{aligned}
f=&6x_0y_0z_0&+2x_1y_0z_0&+6x_2y_0z_0\\ &-2014x_0y_1z_0&+121x_1y_1z_0&-11x_2y_1z_0\\ &+48x_0y_2z_0&-13x_1y_2z_0&
-40x_2y_2z_0\\ &-31x_0y_0z_1&+93x_1y_0z_1&+97x_2y_0z_1\\ &+63x_0y_1z_1&+41x_1y_1z_1&-94x_2y_1z_1\\ &-3x_0y_2z_1&+47x_1y_2z_1&+4x_2y_2z_1  
\end{aligned}
\end{equation*}
}
We find $15$ singular vector triples, $9$ of them are real, $6$
of them make $3$ conjugate pairs.

The minimum distance is $184.038$
and the best rank one approximation is given by the singular vector triple

{\scriptsize $(x_0-.0595538x_1+.00358519x_2)(y_0-289.637y_1+6.98717y_2)(6.95378z_0-.2079687z_1)$.}
Tensor decomposition of $f$ can be computed from the Kronecker normal form
and gives $f$ as sum of three decomposable summands, that is

{\tiny
$f={(.450492 {x}_{0}-1.43768 {x}_{1}-1.40925 {x}_{2})(-.923877 {y}_{0}-.986098 {y}_{1}-.646584 {y}_{2})(.809777 {z}_{0}+68.2814 {z}_{1})}+\\
{(-.582772 {x}_{0}+.548689 {x}_{1}+1.93447 {x}_{2})(.148851 {y}_{0}-3.43755 {y}_{1}-1.07165 {y}_{2})(18.6866 {z}_{0}+28.1003 {z}_{1})}+\\
{(1.06175 {x}_{0}-.0802873 {x}_{1}-.0580488 {x}_{2})(-.0125305 {y}_{0}+3.22958 {y}_{1}-.0575754 {y}_{2})(-598.154 {z}_{0}+10.8017 {z}_{1})}$}

Note that the best rank one approximation  is unrelated to the three summands of minimal tensor decomposition,
in contrast with the Eckart-Young Theorem for matrices.

\end{example}

\begin{theorem}[Lim, variational principle in symmetric case]\cite{Lim05}\label{thm:lims}
The critical points of the distance function from $A\in\mathrm{Sym}^d\R^{n+1}$ 
to the variety $\cT_1$ of rank $1$ tensors are given by $d$-tuples
$x^d\in \mathrm{Sym}^d\R^{n+1}$ such that
\begin{equation}\label{eq:eigen}
A\cdot(x^{d-1})=\lambda x.\end{equation}
\end{theorem}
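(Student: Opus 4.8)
The plan is to derive Theorem \ref{thm:lims} from the general variational principle, Theorem \ref{thm:lim}. By that theorem a rank one tensor $x_1\otimes\ldots\otimes x_d$ is a critical point of the distance from $A$ to $\cT_1$ exactly when $(x_1,\ldots,x_d)$ solves (\ref{eq:singdple}), so it is enough to establish two facts: (i) a \emph{symmetric} tuple $x_1=\ldots=x_d=x$ solves (\ref{eq:singdple}) if and only if $x$ solves the eigen-equation (\ref{eq:eigen}); and (ii) when $A\in\mathrm{Sym}^d\R^{n+1}$, every solution of (\ref{eq:singdple}) is, after reordering and a sign change of the $x_i$, of this symmetric form.

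Part (i) is immediate: putting all $x_i$ equal to $x$ makes the $d$ equations in (\ref{eq:singdple}) coincide, and each of them reads $A\cdot(x^{d-1})=\lambda x$; conversely a solution of (\ref{eq:eigen}) produces the symmetric critical tensor $x^{\otimes d}$ once one uses the normalization (allowed in Theorem \ref{thm:lim}) making $\|x_i\|^2$ independent of $i$. Alternatively one obtains (\ref{eq:eigen}) directly, by differentiating $\|A-tx^{\otimes d}\|^2=\|A\|^2-2t\,A(x,\ldots,x)+t^2\|x\|^{2d}$ in $(t,x)$ and using that the gradient of $x\mapsto A(x,\ldots,x)$ is $d\,A\cdot x^{d-1}$ while that of $x\mapsto\|x\|^{2d}$ is $2d\|x\|^{2(d-1)}x$.

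Part (ii) is the heart of the matter, and forcing the proportionality $x_1\parallel\ldots\parallel x_d$ is the main obstacle. I would argue pairwise: normalize $\|x_i\|=1$, set $\lambda=A(x_1,\ldots,x_d)$, and for a fixed pair $i\neq j$ contract $A$ in all slots different from $i,j$ against the vectors $x_k$, $k\neq i,j$; because $A$ is symmetric this yields a symmetric bilinear map $B\colon\R^{n+1}\times\R^{n+1}\to\R^{n+1}$. Equations (\ref{eq:singdple}) for the indices $i$ and $j$ say precisely $B(x_j,-)=\lambda\langle x_i,-\rangle$ and $B(x_i,-)=\lambda\langle x_j,-\rangle$, i.e., identifying $B$ with a self-adjoint operator via the inner product, $Bx_j=\lambda x_i$ and $Bx_i=\lambda x_j$, so that $x_i+x_j$ and $x_i-x_j$ are eigenvectors of $B$ for the eigenvalues $\lambda$ and $-\lambda$. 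One then wants to conclude $x_i-x_j=0$. This is exactly where a non-degeneracy hypothesis enters: already for $d=2$ the symmetric matrix $A=\mathrm{Diag}(1,-1)$ has the non-symmetric critical rank one matrix $x_1x_2^t$ with $x_1=\tfrac1{\sqrt2}(e_1+e_2)$, $x_2=\tfrac1{\sqrt2}(e_1-e_2)$, precisely because $+1$ and $-1$ are both eigenvalues. Thus one must either assume that no such coincidence occurs among the eigenvalues of the relevant operators (the generic situation), or invoke a global symmetrization argument in the spirit of Banach's theorem for symmetric multilinear forms; this is the step carried out in \cite{Lim05}. With proportionality established, (\ref{eq:singdple}) collapses to (\ref{eq:eigen}) as in part (i), and the theorem follows.
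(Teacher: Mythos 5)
Your reduction to Theorem \ref{thm:lim} rests on claim (ii), and that claim is false; moreover it misreads what variety the distance is being minimized over. In Theorem \ref{thm:lims} the relevant locus is the set of rank-one tensors \emph{inside} $\mathrm{Sym}^d\R^{n+1}$, i.e.\ the cone over the Veronese variety $v_d(\PP^n)$ (this is how the statement is used later in the paper: the Cartwright--Sturmfels count of eigentensors in Theorem \ref{thm:counts} is the ED degree of the Veronese, while the count of Theorem \ref{thm:count} is the ED degree of the Segre). If instead you work on the whole Segre variety, as your plan does, then it is simply not true that every singular $d$-tuple of a symmetric tensor is symmetric up to reordering and signs, not even generically, once $d\ge 3$: a general $A\in\mathrm{Sym}^3\C^3$ has $37$ singular triples but only $7$ eigentensors, and in the binary symmetric case the counts are $d!$ versus $d$. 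So the non-symmetric critical points you are trying to exclude genuinely exist for generic $A$; no non-degeneracy hypothesis on eigenvalue coincidences can remove them, and the symmetrization results you invoke (Banach, Friedland) only assert that the \emph{global minimizer} --- the best rank-one approximation --- can be chosen symmetric, saying nothing about the other critical points. Your pairwise argument with the operator $B$ correctly produces $B(x_i+x_j)=\lambda(x_i+x_j)$ and $B(x_i-x_j)=-\lambda(x_i-x_j)$, and there it necessarily stops: for $d\ge 3$ the unwanted solutions are not an artifact of degeneracy, they are the extra $30$ (resp.\ $d!-d$) critical points on the Segre that the theorem is not talking about.

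What does work is precisely your ``alternative'' computation in part (i), which, once the statement is read as concerning the symmetric rank-one locus, is the entire proof and needs no reference to singular tuples. Parametrize the cone by $\pm t\,x^{\otimes d}$ with $\|x\|=1$, or equivalently note that the tangent space to the cone at $x^d$ is $\{x^{d-1}y:\ y\in\R^{n+1}\}$; criticality of $x^d$ means $\langle A-x^d,\,x^{d-1}y\rangle=0$ for all $y$, i.e.\ $\langle A\cdot x^{d-1}-\|x\|^{2(d-1)}x,\ y\rangle=0$ for all $y$, which is exactly $A\cdot x^{d-1}=\lambda x$ with $\lambda=\|x\|^{2(d-1)}$. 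This is the Lagrange-multiplier argument of \cite{Lim05} (extremizing $A(x,\ldots,x)$ on the unit sphere), it is the symmetric analogue of the tangent-space computation used throughout Section \ref{sec:svdey}, and it should replace the reduction-to-Segre strategy rather than supplement it.
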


The tensor $x$ in (\ref{eq:eigen}) is called a {\it eigenvector}, the corresponding power $x^d$ is a {\it eigentensor} , $\lambda$ is called a {\it eigenvalue}.

\subsection{Dual varieties and hyperdeterminant}
If $\X\subset \PP V$
then $$\X^{*}:=\overline{\{H\in \PP V^{*}|\exists\textrm{\ smooth point\ }p\in \X\textrm{\ s.t.\ }
\T_p\X\subset H\}}$$
is called the {\it dual variety} of $\X$ (see \cite[Chapter 1]{GKZ}).
So $\X^{*}$ consists of hyperplanes tangent at some smooth point of $\X$.

In Euclidean setting, duality may be understood in terms of orthogonality.
Considering the affine cone of a projective variety $\X$,
the dual variety consists of the cone of all vectors which are orthogonal
to some tangent space to $\X$. 
\vskip 0.5cm

Let $m\le n$. The dual variety of $m\times n$  matrices of rank $\le r$ is given by 
$m\times n$ matrices of rank $\le m-r$ (\cite[Prop. 4.11]{GKZ}). In particular, the dual of the Segre variety
of matrices of rank $1$ is the determinant hypersurface.

Let $n_1\le\ldots\le n_d$. The dual variety of tensors of format $(n_1+1)\times\ldots\times (n_d+1)$
is by definition the {\it hyperdeterminant} hypersurface, whenever $n_d\le \sum_{i=1}^{d-1}n_i$.
Its equation is called the hyperdeterminant. Actually, this defines the hyperdeterminant up to scalar multiple, but
it can be normalized asking that the coefficient of its leading monomial is $1$.

\section{Basic properties of singular vector tuples and of eigentensors. The singular space of a tensor.}
\subsection{Counting the singular tuples}
In this subsection we expose
the results of \cite{FO} about the number of singular tuples (see Theorem \ref{thm:lim} ) of a general tensor.

\medskip
 

\begin{theorem}\cite{FO}\label{thm:count}
The number of (complex) singular $d$-tuples of a general tensor $t\in\PP(\R^{n_1+1}\otimes\ldots\otimes\R^{n_d+1})$
 is equal to the coefficient of  $\prod_{i=1}^dt_i^{n_i}$
in the polynomial
$$\prod_{i=1}^d\frac{{\hat{t_i}}^{n_i+1}-t_i^{n_i+1}}{\hat{t_i}-t_i}$$
where $\hat{t_i}=\sum_{j\neq i}t_j$.
\end{theorem}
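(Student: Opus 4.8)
The plan is to realize the singular $d$-tuples of a general tensor as the solutions of a section of a vector bundle on the product of projective spaces $X=\PP^{n_1}\times\ldots\times\PP^{n_d}$, and then to compute the top Chern class of that bundle. First I would rewrite the critical-point equations (\ref{eq:singdple}) projectively: if $[x_i]\in\PP^{n_i}$ and $x_i$ is a representative, the condition $A\cdot(x_1\otimes\ldots\widehat{x_i}\ldots\otimes x_d)=\lambda x_i$ says exactly that the vector $A\cdot(x_1\otimes\ldots\widehat{x_i}\ldots\otimes x_d)\in (\R^{n_i+1})^\vee$, viewed modulo the line spanned by $x_i$ (using the metric to identify $\R^{n_i+1}$ with its dual), vanishes. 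Eliminating $\lambda$, the locus of singular tuples is the zero scheme of a global section $s_A$ of the bundle $E=\bigoplus_{i=1}^d \pi_i^*Q_i\otimes\left(\bigotimes_{j\neq i}\pi_j^*\O(1)\right)$, where $Q_i$ is the rank-$n_i$ quotient bundle $T\PP^{n_i}(-1)$ on the $i$-th factor (the quotient of the trivial bundle by $\O(-1)$). The rank of $E$ is $\sum_{i=1}^d n_i=\dim X$, so for general $A$ the section $s_A$ is transverse with finitely many zeros, and their number is $\int_X c_{\mathrm{top}}(E)$.

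Next I would compute this Chern number. Write $t_i=c_1(\pi_i^*\O(1))=\pi_i^*h_i$, so that $\int_X \prod t_i^{n_i}=1$ and higher powers vanish. The total Chern class of the summand $\pi_i^*Q_i\otimes(\bigotimes_{j\neq i}\pi_j^*\O(1))$ is obtained from $c(Q_i)$ by the twist formula; since $c(Q_i)=1+t_i+t_i^2+\ldots+t_i^{n_i}=\frac{1-t_i^{n_i+1}}{1-t_i}$ has Chern roots that are (formally) $t_i$ with multiplicity — more precisely $c(Q_i)=\sum_{k=0}^{n_i}t_i^k$ — twisting each root by $\hat t_i:=\sum_{j\neq i}t_j$ replaces the generating series $\sum_{k\ge 0} t_i^k$ by $\sum_{k\ge 0}(t_i+\hat t_i)^k$ evaluated appropriately, giving top Chern class of the $i$-th summand equal to $\sum_{k=0}^{n_i} t_i^k \hat t_i^{\,n_i-k}=\frac{\hat t_i^{\,n_i+1}-t_i^{n_i+1}}{\hat t_i-t_i}$. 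Multiplying over $i$ and extracting the coefficient of $\prod_i t_i^{n_i}$ — which is exactly the integral over $X$, since that is the only monomial of the right multidegree that survives — yields the stated formula.

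The step I expect to be the main obstacle is setting up the bundle $E$ correctly and justifying that its generic section is transverse with \emph{all} zeros isolated and of multiplicity one, i.e.\ that the count of singular tuples of a general tensor genuinely equals $\int_X c_{\mathrm{top}}(E)$. This requires (a) checking that the map $A\mapsto s_A$ is surjective onto global sections of $E$ (so Bertini-type genericity applies), or at least that a general $A$ gives a transverse section, and (b) confirming there are no singular tuples "at infinity" or hidden in the locus where some $x_i$ is isotropic for the complexified metric that would spoil the correspondence — one works over $\C$ throughout, identifying $\C^{n_i+1}$ with its dual via a fixed nondegenerate symmetric bilinear form, and one must check the construction is independent of choices and that $s_A$ is a well-defined section of the asserted bundle. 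The twisting computation of Chern classes and the final coefficient extraction are then routine once the enumerative setup is secured; I would also sanity-check the formula against the matrix case $d=2$, where it should return $\min(m,n)$ singular pairs, and against known small cases.
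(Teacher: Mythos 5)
Your proposal follows essentially the same route as the paper, which realizes the singular $d$-tuples as the zero locus of a section of $\bigoplus_{i=1}^d\pi_i^*Q\otimes\O(1,\ldots,1,0,1,\ldots,1)$ (zero in the $i$-th slot) on the Segre variety and obtains the count as the top Chern class; your explicit computation $c_{n_i}\bigl(Q_i\otimes\bigotimes_{j\neq i}\pi_j^*\O(1)\bigr)=\sum_{k=0}^{n_i}t_i^k\hat t_i^{\,n_i-k}=\frac{\hat t_i^{\,n_i+1}-t_i^{n_i+1}}{\hat t_i-t_i}$ and the coefficient extraction are exactly what the paper leaves implicit. The transversality and genericity issues you correctly flag are precisely what the cited work of Friedland and Ottaviani supplies, so your outline is sound.
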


\noindent Amazingly, for $d=2$ this formula gives the expected value
$\min(n_1+1, n_2+1)$.

For the proof, in \cite{FO} the $d$-tuples of singular vectors 
were expressed as zero loci of sections of a suitable
vector bundle on the Segre variety $\cT_1$.

Precisely,
let $\cT_1=\PP(\C^{n_1+1})\times\ldots\times \PP(\C^{n_d+1})$
and let $\pi_i\colon \cT_1\to \PP(\C^{n_i+1})$ be the projection on the $i$-th factor.
Let $\O(\underbrace{1,\ldots, 1}_d)$ be the very ample line bundle which gives
the Segre embedding and let $Q$ be the quotient bundle.
\medskip 

$\begin{array}{ccccccccccccc}\textrm{Then the bundle is\ }\oplus_{i=1}^d(\pi_i^*Q)&\otimes&\O(&1&,&\ldots&,&1&,&0&,1,\ldots, 1).\\
&&&&&&&&&&&\uparrow\\
&&&&&&&&&&&i\end{array}$

The top Chern class of this bundle gives the formula in Theorem \ref{thm:count}.
\medskip

In the format
$(\underbrace {2,\ldots , 2}_d)$ the number of singular $d$-tuples is $d!$. 

The following table lists the number of singular triples in the format $(d_1, d_2, d_3)$
\vskip 0.8cm

$\begin{array}{r|r|c}d_1, d_2, d_3&c(d_1,d_2,d_3)\\
\hline
 2, 2, 2& 6\\
 2, 2, n& 8&n\ge 3\\
 2, 3, 3& 15\\
 2, 3, n& 18&n\ge 4\\
 2, n, n&n(2n-1)\\
 3, 3, 3& 37\\
 3, 3, 4& 55\\
 3, 3, n& 61&n\ge 5\\
 3, 4, 4& 104\\
 3, 4, 5& 138\\
 3, 4, n& 148&n\ge 6\\
  \end{array}$

The number of singular $d$-tuples of a general tensor $A\in \C^{n_1+1}\otimes\ldots\otimes\C^{n_d+1}$, when $n_1,\ldots, n_{d-1}$ are fixed and $n_d$ increases,
  stabilizes for
$n_d\ge \sum_{i=1}^{d-1}n_i$, as it can be shown from Theorem \ref{thm:count}.

 For example, for a tensor of size
$2\times 2\times n$, there are $6$ singular vector triples for $ n = 2$
and $8$ singular vector triples for $n \ge 3$. 

The format with $n_d=\sum_{i=1}^{d-1}n_i$ is the  {\it boundary format}, well known in hyperdeterminant theory
\cite{GKZ}. It generalizes the square case for matrices.

The symmetric counterpart of Theorem \ref{thm:count} is the following

\begin{theorem}[Cartwright-Sturmfels]\cite{CS}\label{thm:counts}
The number of (complex) eigentensors of a general tensor $t\in\PP(\mathrm{Sym}^d\R^{n+1})$
 is equal to 
$$\frac{(d-1)^{n+1}-1}{d-2}.$$
\end{theorem}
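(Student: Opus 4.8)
The plan is to run the vector-bundle argument of \cite{FO} in the symmetric setting, where the Segre variety $\cT_1$ is replaced by a single projective space. First I would reinterpret equation (\ref{eq:eigen}): a point $[x]\in\PP^n=\PP(\C^{n+1})$ is an eigenvector of $A\in\mathrm{Sym}^d\C^{n+1}$ exactly when the vector $A\cdot x^{d-1}\in\C^{n+1}$, which is homogeneous of degree $d-1$ in $x$ (here we use the complexified metric to identify $\C^{n+1}$ with its dual, as after (\ref{eq:singdple})), is proportional to $x$. Composing the section $x\mapsto A\cdot x^{d-1}$ of $\O(d-1)^{\oplus(n+1)}$ with the quotient map in the tautological sequence $0\to\O(-1)\to\O^{\oplus(n+1)}\to Q\to 0$ produces a global section $s_A$ of the rank-$n$ bundle $Q(d-1):=Q\otimes\O(d-1)$ on $\PP^n$, whose zero locus is precisely the set of eigenvectors of $A$; since $[x]\mapsto x^d$ is injective on $\PP^n$, eigentensors correspond bijectively to these zeros.

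Second, I would show that for a general $A$ the zero scheme $Z(s_A)$ is a reduced finite set, so that its cardinality equals $\int_{\PP^n}c_n(Q(d-1))$. It suffices that the linear subspace $\{s_A:A\in\mathrm{Sym}^d\C^{n+1}\}\subseteq H^0(\PP^n,Q(d-1))$ globally generates $Q(d-1)$; then a Bertini-type argument (valid since we work in characteristic zero and the subsystem globally generates the bundle) forces a general member to vanish on a smooth scheme of codimension $n$, i.e. on finitely many reduced points, and the zero set is nonempty because $c_n(Q(d-1))>0$. Global generation is checked pointwise: for fixed $x\neq 0$ the vectors $A\cdot x^{d-1}$ already exhaust $\C^{n+1}$ as $A$ runs over symmetric tensors (for instance using symmetrizations of $v\otimes x\otimes\cdots\otimes x$), hence a fortiori they surject onto $Q(d-1)|_{[x]}=\C^{n+1}/\C x$.

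Third comes the numerology. Twisting the tautological sequence by $\O(d-1)$ gives $0\to\O(d-2)\to\O(d-1)^{\oplus(n+1)}\to Q(d-1)\to 0$, so in $A^*(\PP^n)=\Z[h]/(h^{n+1})$, with $h$ the hyperplane class, $c(Q(d-1))=(1+(d-1)h)^{n+1}/(1+(d-2)h)$, and the eigentensor count is the coefficient of $h^n$ in this expression. Setting $a=d-1$ and $b=d-2$ (so $a-b=1$), that coefficient is $\sum_{j=0}^n\binom{n+1}{j}a^j(-b)^{n-j}$; since $\sum_{j=0}^{n+1}\binom{n+1}{j}a^j(-b)^{n+1-j}=(a-b)^{n+1}=1$, removing the $j=n+1$ summand $a^{n+1}$ and dividing by $-b$ yields $(a^{n+1}-1)/b=((d-1)^{n+1}-1)/(d-2)$, as claimed (for $d=2$ one reads the right-hand side as its limit $n+1$, recovering the Spectral Theorem).

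I expect the genericity step to be the real obstacle: one must be sure the sections coming from symmetric tensors are generic enough inside $H^0(Q(d-1))$, i.e. that the evaluation $\mathrm{Sym}^d\C^{n+1}\otimes\O_{\PP^n}\to Q(d-1)$ is surjective, so that no zeros are lost to excess multiplicity or to positive-dimensional components. A reassuring cross-check, which also yields the count by bare hands, is the diagonal tensor $A$ with $A\cdot x^d=\sum_{i=0}^n\lambda_ix_i^d$ and generic nonzero $\lambda_i$: then $A\cdot x^{d-1}=\mu x$ reads $\lambda_ix_i^{d-1}=\mu x_i$ for all $i$, whose projective solutions with support a fixed nonempty $S\subseteq\{0,\dots,n\}$ number $(d-2)^{|S|-1}$, and $\sum_{\emptyset\neq S}\binom{n+1}{|S|}(d-2)^{|S|-1}=((d-1)^{n+1}-1)/(d-2)$ once more; comparing this with $c_n(Q(d-1))$ shows the zeros are all simple, whence the general tensor has exactly this many eigentensors.
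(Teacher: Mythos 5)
The paper does not actually prove this statement: Theorem \ref{thm:counts} is quoted from Cartwright--Sturmfels \cite{CS}, and the only proof idea the survey sketches is for the non-symmetric count of Theorem \ref{thm:count}, namely realizing singular $d$-tuples as the zero locus of a section of a vector bundle on the Segre variety and taking its top Chern class. Your argument is precisely the symmetric counterpart of that technique, carried out on $\PP^n$ with the bundle $Q\otimes\O(d-1)\cong T_{\PP^n}(d-2)$, and it is correct: the identification of eigenvectors with zeros of $s_A$, the Chern class computation via the twisted Euler sequence, and the final binomial manipulation giving $((d-1)^{n+1}-1)/(d-2)$ all check out, and your diagonal-tensor count $\sum_{\emptyset\neq S}\binom{n+1}{|S|}(d-2)^{|S|-1}$ is a legitimate independent way to settle transversality, since exhibiting one tensor with exactly $c_n$ isolated reduced zeros forces the general one to have the same number of simple zeros. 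So you have supplied a proof where the paper only supplies a citation, and it is essentially the proof known in the literature ([CS], [FO]).

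One small step needs repair as written: your pointwise witness for global generation, the symmetrization of $v\otimes x\otimes\cdots\otimes x$, contracts against $x^{d-1}$ to $\frac1d\bigl(\langle x,x\rangle^{d-1}v+(d-1)\langle v,x\rangle\langle x,x\rangle^{d-2}x\bigr)$, which vanishes (for $d\ge 3$) when $x$ is isotropic for the complexified metric, i.e.\ on the quadric $\langle x,x\rangle=0$; so at those points this particular family of tensors does not generate the fibre. The claim itself is still true and easy to fix: under the metric identification $A\cdot x^{d-1}$ is just $\frac1d\nabla A(x)$ in orthonormal coordinates, and already the powers $\ell^d$ of linear forms give $\nabla(\ell^d)(x)=d\,\ell(x)^{d-1}c$ with $\ell=c\cdot y$, which span $\C^{n+1}$ as $\ell$ ranges over forms with $\ell(x)\neq 0$ (equivalently, use symmetrizations of $v\otimes w^{d-1}$ with $\langle w,x\rangle\neq 0$). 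With that substitution your global generation, hence the Bertini step, is complete; alternatively your diagonal example alone suffices for genericity, as noted above.
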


\subsection{The singular space of a tensor}

We start informally to study the singular triples of a $3$-mode tensor $A$, later we will generalize to any tensor.
The singular triples $x\otimes y\otimes z$ of $A$ satisfy (see Theorem \ref{thm:lim} ) the equations

$$\sum_{i_0, i_1}A_{i_0i_1k}x_{i_0}y_{i_1}=\lambda z_{k}\quad\forall k$$

hence, by eliminating $\lambda$, the equations (for every $k<s$) $$\sum_{i_0, i_1}\left(A_{i_0i_1k}x_{i_0}y_{i_1}z_s-A_{i_0i_1s}x_{i_0}y_{i_1}z_k\right)=0$$
which are linear equations in the Segre embedding space.
These equations can be permuted on $x, y, z$ and give

\begin{equation}\label{eq:singspace}\left\{\begin{array}{cc}
\sum_{i_0, i_1}\left(A_{i_0i_1k}x_{i_0}y_{i_1}z_s-A_{i_0i_1s}x_{i_0}y_{i_1}z_k\right)=0&\textrm{\ for\ }0\le k<s\le n_3\\
\sum_{i_0, i_2}\left(A_{i_0 ki_2}x_{i_0}y_{s}z_{i_2}-A_{i_0 si_2}x_{i_0}y_{k}z_{i_2}\right)=0&\textrm{\ for\ }0\le k<s\le n_2\\
\sum_{i_1, i_2}\left(A_{k i_1 i_2}x_{s}y_{i_1}z_{i_2}-A_{s i_1 i_2}x_{k}y_{i_1}z_{i_2}\right)=0&\textrm{\ for\ }0\le k<s\le n_1\\
\end{array}\right.\end{equation}

These equations define the singular space of $A$, which is the linear span of all the singular vector triples of $A$.

The tensor $A$ belongs to the singular space of $A$, as it is trivially shown by the following identity (and its permutations)

$$\sum_{i_0,i_1}\left(A_{i_0i_1 k}A_{i_0i_1 s}-A_{i_0i_1 s}A_{i_0i_1 k}\right)=0.$$

In the symmetric case, the eigentensors $x^d$ of a symmetric tensor \break $A\in \mathrm{Sym}^d\C^{n+1}$ are defined by the
linear dependency of the two rows of the $2\times (n+1)$ matrix 

$$\begin{pmatrix}\nabla A(x^{d-1})\\ x\end{pmatrix}$$

Taking the $2\times 2$ minors we get the following

\begin{definition}
If $A\in \mathrm{Sym}^d\C^{n+1}$ is a symmetric tensor, then the singular space is given by the following ${{n+1}\choose 2}$ linear equations
in the unknowns $x^d$
$$\frac {\partial A(x^{d-1})}{\partial x_j}x_i-\frac {\partial A(x^{d-1})}{\partial x_i}x_j=0$$
\end{definition}

It follows from the definition that the singular space of $A$ is spanned
by all the eigentensors $x^d$ of $A$.

\begin{prop}\label{prop:singspacesym}

The symmetric tensor $A\in\mathrm{Sym}^d\C^{n+1}$ belongs to the singular space of $A$.
The dimension of the singular space is ${{n+d}\choose d}-{{n+1}\choose 2}$ .
The eigentensors are independent for a general $A$ (and then make a basis of the singular space) just in the cases
$\mathrm{Sym}^d\C^2$, $\mathrm{Sym}^2\C^{n+1}$, $\mathrm{Sym}^3\C^3$.
\end{prop}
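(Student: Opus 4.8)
The plan is to establish the three assertions of Proposition \ref{prop:singspacesym} in order, treating the dimension count as the technical core.

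First, membership of $A$ in its own singular space: one simply verifies that $x^d=A$ satisfies the defining equations $\frac{\partial A(x^{d-1})}{\partial x_j}x_i-\frac{\partial A(x^{d-1})}{\partial x_i}x_j=0$. Here one must interpret the substitution correctly: the coordinates of the eigentensor $x^d$ are the degree-$d$ monomials in $x$, and replacing them by the coordinates $a_{i_1\ldots i_d}$ of $A$, the expression $\frac{\partial A(x^{d-1})}{\partial x_j}$ becomes (up to the usual multinomial constant) a partial contraction of $A$ with $A$ in $d-1$ slots, which is visibly symmetric in the roles of $x_i$ and $x_j$; hence the claimed identity, exactly as in the non-symmetric display $\sum_{i_0,i_1}(A_{i_0i_1k}A_{i_0i_1s}-A_{i_0i_1s}A_{i_0i_1k})=0$. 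This step is a direct computation.

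Second, the dimension. I would argue that the ${{n+1}\choose 2}$ linear forms defining the singular space are linearly independent for a general (indeed, for a sufficiently generic) $A$, so that the singular space, as a linear subspace of $\mathrm{Sym}^d\C^{n+1}\cong\C^{{n+d\choose d}}$, has codimension exactly ${{n+1}\choose 2}$ and hence dimension ${{n+d}\choose d}-{{n+1}\choose 2}$. Independence of the forms can be checked by producing, for each pair $i<j$, a coordinate $x^d$ (a single monomial $x_0^{a_0}\cdots x_n^{a_n}$) on which the $(i,j)$-form is nonzero while a chosen triangular system of such witnesses makes the coefficient matrix nonsingular; alternatively one evaluates the forms at a generic $A$ and observes that the map $A\mapsto(\text{vector of the }{n+1\choose 2}\text{ forms})$ is surjective. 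The latter is cleanest: the ${n+1\choose 2}$ bilinear pairings are, up to reindexing, distinct and pick out independent combinations of the $a_{i_1\ldots i_d}$, so no nonzero linear relation among them holds identically in $A$.

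Third, that the eigentensors span the singular space follows from the remark already made before the proposition (each eigentensor lies in the singular space, and by the Cartwright--Sturmfels count, Theorem \ref{thm:counts}, for generic $A$ there are $\frac{(d-1)^{n+1}-1}{d-2}$ of them, enough to span once one knows they are in linearly general position inside it, which is part of the genericity statement being proved). For the final clause, one compares the number of eigentensors with the dimension just computed: the eigentensors of a general $A$ form a basis of the singular space precisely when $\frac{(d-1)^{n+1}-1}{d-2}={{n+d}\choose d}-{{n+1}\choose 2}$, and then one checks this Diophantine equality holds only in the three listed families. The cases $n=1$ (where the left side is $\frac{(d-1)^2-1}{d-2}=d$ and the right side is $(d+1)-1=d$), $d=2$ (where both sides equal ${n+2\choose 2}-{n+1\choose 2}=n+1$, reading the left side as a limit), and $(n,d)=(2,3)$ (both sides equal $7$) are the solutions; for $n\ge 2$ and $d\ge 3$ with $(n,d)\neq(2,3)$ the left side grows exponentially in $n$ while the right side grows polynomially, so a short estimate rules out all remaining cases.

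The main obstacle is the dimension count in the second step: showing the ${n+1\choose 2}$ defining equations are genuinely independent (equivalently, that the singular space does not drop dimension for generic $A$) requires an explicit transversality or rank argument rather than a formal manipulation, and it is on this that the clean statement ${{n+d}\choose d}-{{n+1}\choose 2}$ rests. Once that is in hand, the spanning statement and the classification of the equality cases are comparatively routine.
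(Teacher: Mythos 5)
Your overall architecture coincides with the paper's proof: membership of $A$ via symmetry of the self-contraction, the dimension via independence of the ${{n+1}\choose 2}$ defining forms for a general $A$, and the classification of the basis cases by comparing the Cartwright--Sturmfels count $\frac{(d-1)^{n+1}-1}{d-2}$ with ${{n+d}\choose d}-{{n+1}\choose 2}$ (your numerical checks for $n=1$, $d=2$ and $(n,d)=(2,3)$ are correct). However, the step you yourself flag as the technical core --- independence, for general $A$, of the forms $\frac {\partial A}{\partial x_j}x_i-\frac {\partial A}{\partial x_i}x_j$, $i<j$ --- is not actually established. Your ``cleanest'' route is fallacious: showing that no relation $\sum_{i<j} c_{ij}\left(\frac {\partial A}{\partial x_j}x_i-\frac {\partial A}{\partial x_i}x_j\right)=0$ with constant coefficients $c_{ij}$ holds identically in $A$ does not exclude relations whose coefficients depend on $A$, and it is precisely such relations that would lower the codimension for every (hence for the general) $A$. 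A toy example: the forms $a_1t_1$ and $a_2t_1$, bilinear in $(a,t)$, admit no constant-coefficient relation holding identically in $a$, yet for each fixed $a$ they are proportional as linear forms in $t$. Your first route (a triangular system of monomial witnesses) is the right idea, but it is left without the witness.

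What closes the gap --- and is essentially all the paper does --- is to exhibit one explicit $A$ for which the forms are visibly independent and then invoke semicontinuity of rank: for $d\ge 3$ take the Fermat polynomial $A=\sum_{i=0}^n x_i^d$, for which the forms become $d\left(x_j^{d-1}x_i-x_i^{d-1}x_j\right)$, supported on pairwise distinct monomials; for $d=2$ the Fermat example degenerates (all the forms vanish identically, which shows that genericity genuinely matters here), and one uses $A=\sum_{p<q}x_px_q$ instead. With such a witness your remaining steps go through, granted (as the paper also grants, via the remark preceding the proposition) that the eigentensors span the singular space, so that their independence is equivalent to the equality of the count with the dimension; your exponential-versus-polynomial estimate excluding all other formats is fine in spirit, but the finitely many borderline formats (e.g.\ $(n,d)=(2,4)$, $(3,3)$) should be checked directly.
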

\begin{proof}
To check that $A$ belongs to the singular space, consider dual variables $y_j=
\frac {\partial}{\partial x_j}$.
Then we have
$\left(\frac {\partial A}{\partial y_j}y_i-\frac {\partial A}{\partial y_i}y_j\right)\cdot A(x)=\frac {\partial A}{\partial y_j}\cdot\frac {\partial A}{\partial x_i}-
\frac {\partial A}{\partial y_i}\cdot\frac {\partial A}{\partial x_j}$,
which vanishes by symmetry.
To compute the dimension of the singular space, first recall that symmetric tensors in $\mathrm{Sym}^d\C^{n+1}$
correspond to homogeneous polynomials of degree $d$ in $n+1$ variables. We have to show that for a general polynomial $A$,
the ${{n+1}\choose 2}$ polynomials
$\frac {\partial A}{\partial x_j}x_i-\frac {\partial A}{\partial x_i}x_j$ for $i<j$ are independent. This is easily checked for the Fermat polynomial
$A=\sum_{i=0}^{n}x_i^d$ for $d\ge 3$ and for the polynomial
$A=\sum_{p<q}x_px_q$ for $d=2$.
The case listed are the ones where the inequality
$$\frac{(d-1)^{n+1}-1}{d-2}\ge {{n+d}\choose d}-{{n+1}\choose 2}$$
 is an equality (for $d=2$ the left-hand side reads as $\sum_{i=0}^n(d-1)^i=n+1$).
\end{proof}

Denote by $e_j$ the canonical basis in any vector space $\C^n$. 

\begin{prop}\label{prop:singspacegen}
Let $n_1\le\ldots\le n_d$ . If $A\in\C^{n_1+1}\otimes\ldots\otimes\C^{n_d+1}$ is a tensor, then the singular space of $A$ is given by the following $\sum_{i=1}^d{{n_i+1}\choose 2}$ linear equations.
The ${{n_i+1}\choose 2}$ equations of the $i$-th group ($i=1,\ldots, d$) for $x^1\otimes\ldots\otimes x^d$ are
$$\begin{array}{ccccccccccccccc}A(x^1,x^2,\ldots,&e_p&,\ldots,x^d)(x^i)_q-A(x^1,x^2,\ldots,&e_q&,\ldots,x^d)(x^i)_p=0\\
&\uparrow&&\uparrow\\
&i&&i\end{array}$$
for $0\le p<q\le n_i$.
The tensor $A$ belongs to this linear space, which we call again the singular space of $A$.
\end{prop}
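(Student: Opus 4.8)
The plan is to obtain the stated equations from Lim's variational principle (Theorem~\ref{thm:lim}) in exactly the way the system (\ref{eq:singspace}) was obtained in the $3$-mode case, and then to verify by a direct index computation that $A$ satisfies them. First I would rephrase (\ref{eq:singdple}): a $d$-tuple $(x^1,\ldots,x^d)$ is a singular vector tuple with singular value $\lambda$ iff for every $i$ the vector $A\cdot(x^1\otimes\ldots\widehat{x^i}\ldots\otimes x^d)\in\C^{n_i+1}$ is a scalar multiple of $x^i$; here the $r$-th coordinate of that contracted vector is precisely $A(x^1,\ldots,e_r,\ldots,x^d)$ with $e_r$ in the $i$-th slot. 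Proportionality of these two vectors of $\C^{n_i+1}$ is the vanishing of all $2\times 2$ minors of the $2\times(n_i+1)$ matrix with rows $A\cdot(x^1\otimes\ldots\widehat{x^i}\ldots\otimes x^d)$ and $x^i$ (the $d$-mode analogue of the matrix $\begin{pmatrix}\nabla A(x^{d-1})\\ x\end{pmatrix}$ of the symmetric case), and the $(p,q)$-minor is exactly $A(x^1,\ldots,e_p,\ldots,x^d)(x^i)_q-A(x^1,\ldots,e_q,\ldots,x^d)(x^i)_p$. Substituting $A(x^1,\ldots,e_r,\ldots,x^d)=\lambda(x^i)_r$ shows every singular vector $d$-tuple makes all $\sum_i\binom{n_i+1}{2}$ of these expressions vanish, so its span --- the singular space --- lies in their common zero set.

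Next I would check that each of these $(i,p,q)$ expressions really is a linear equation on the whole tensor space, not merely on the Segre cone $\cT_1$. Written in coordinates, $A(x^1,\ldots,e_p,\ldots,x^d)(x^i)_q$ is a sum of monomials $\prod_{l\neq i}x^l_{j_l}\cdot x^i_q$, so the whole $(i,p,q)$ expression is multilinear of multidegree $(1,1,\ldots,1)$ in $(x^1,\ldots,x^d)$; by the universal property of the tensor product it is the restriction to decomposable tensors of a unique linear form $\ell^A_{i,p,q}$ on $\C^{n_1+1}\otimes\ldots\otimes\C^{n_d+1}$, whose value on an arbitrary tensor $B=(B_{k_1\ldots k_d})$ is
$$\ell^{A}_{i,p,q}(B)=\sum_{j_l,\ l\neq i}\Bigl(A_{j_1\ldots p\ldots j_d}\,B_{j_1\ldots q\ldots j_d}-A_{j_1\ldots q\ldots j_d}\,B_{j_1\ldots p\ldots j_d}\Bigr),$$
where $p$ (resp. $q$) sits in the $i$-th index and the indices $j_l$, $l\neq i$, agree in $A$ and $B$. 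Thus the singular space is cut out by the honest linear system $\{\ell^A_{i,p,q}=0\}$, exactly as the system (\ref{eq:singspace}) for $d=3$.

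Finally, to see that $A$ itself belongs to this space I would simply set $B=A$ in the displayed formula: each summand becomes $A_{j_1\ldots p\ldots j_d}A_{j_1\ldots q\ldots j_d}-A_{j_1\ldots q\ldots j_d}A_{j_1\ldots p\ldots j_d}=0$, hence $\ell^A_{i,p,q}(A)=0$ for all $i$ and all $p<q$. This is the general form of the identity $\sum_{i_0,i_1}\bigl(A_{i_0i_1k}A_{i_0i_1s}-A_{i_0i_1s}A_{i_0i_1k}\bigr)=0$ used for $3$-mode tensors, and completes the argument.

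I do not expect a serious obstacle: everything reduces to bookkeeping with indices. The only point requiring a little care is matching the printed equations to the $2\times 2$ minors, i.e. keeping track of which index occupies the $i$-th slot in each tensor factor, together with the harmless observation that for the inclusion ``singular space $\subseteq$ zero set'' one only needs each mode's proportionality, not Lim's common multiplier $\lambda$. (If one also wanted the reverse inclusion, so that the zero set is exactly the span of the singular vector tuples, that is the content of a dimension count; in the symmetric specialization this is Proposition~\ref{prop:singspacesym}. Here, following the text, we take these equations as the definition of the singular space of $A$.)
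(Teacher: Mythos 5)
Your proposal is correct and follows essentially the same route as the paper: the equations are obtained by eliminating $\lambda$ from Lim's conditions (the $2\times 2$ minors in each mode, exactly as in the system (\ref{eq:singspace}) for $d=3$), and the membership of $A$ is verified by the same trivial cancellation $\sum_{j_l,\, l\neq i}\bigl(A_{j_1\ldots p\ldots j_d}A_{j_1\ldots q\ldots j_d}-A_{j_1\ldots q\ldots j_d}A_{j_1\ldots p\ldots j_d}\bigr)=0$ that constitutes the paper's proof. Your additional remarks on multilinearity defining genuine linear forms on the whole tensor space are consistent with the paper's informal derivation and add nothing that conflicts with it.
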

\proof Let $A=\sum_{i_1,\ldots, i_d}A_{i_1,\ldots, i_d}x_{i_1}^1\otimes\ldots\otimes x_{i_d}^d$.
Then we have for the first group of equations
$$\sum_{i_2,\ldots, i_d} \left(A_{k,i_2,\ldots, i_d}A_{s,i_2,\ldots, i_d}-A_{s,i_2,\ldots, i_d}A_{k,i_2,\ldots, i_d}\right)=0\quad \textrm{for\ }
0\le k<s\le n_1$$ and the same argument works for the other groups of equations.
\qed

We state, with a sketch of the proof, the following generalization of the dimensional part of Prop. \ref{prop:singspacesym}.
\begin{prop}\label{prop:singspacedim}
Let $n_1\le\ldots\le n_d$ and $N=\prod_{i=1}^{d-1}(n_i+1)$. \break If $A\in\C^{n_1+1}\otimes\ldots\otimes\C^{n_d+1}$ is a tensor,
the dimension of the singular space of $A$ is
$$\left\{\begin{array}{lr}\prod_{i=1}^d(n_i+1)-\sum_{i=1}^{d}{{n_i+1}\choose 2}&\textrm{\ for\ }n_d+1\le N\\
\\
 {{N+1}\choose 2}-\sum_{i=1}^{d-1}{{n_i+1}\choose 2}&\textrm{\ for\ }n_d+1\ge N.\end{array}\right.$$
The singular $d$-tuples are independent (and then make a basis of this space) just in cases
$d=2$, $\C^2\otimes\C^2\otimes\C^2$, $\C^2\otimes\C^2\otimes\C^n$ for $n\ge 4$.
\end{prop}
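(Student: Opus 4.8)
The plan is to pass from the description in Proposition \ref{prop:singspacegen} to the dimension by computing, for a general $A$, the rank of the displayed system of $\sum_{i=1}^d\binom{n_i+1}{2}$ linear equations; the dimension of the singular space is then $\prod_{i=1}^d(n_i+1)$ minus this rank. The convenient reformulation, already implicit in the proof of Proposition \ref{prop:singspacegen}, is the following: writing $M^{(i)}$ and $Z^{(i)}$ for the $i$-th flattenings of $A$ and of the unknown tensor $Z$ — matrices of size $(n_i+1)\times\prod_{j\neq i}(n_j+1)$ — the $i$-th group of equations says exactly that the $(n_i+1)\times(n_i+1)$ matrix $M^{(i)}\bigl(Z^{(i)}\bigr)^t$ is symmetric. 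So the singular space is $\{Z:\ M^{(i)}(Z^{(i)})^t\text{ is symmetric for }i=1,\dots,d\}$ and the whole question becomes linear algebra over $\C$.

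The heart of the argument is the claim that, for a general $A$ whose format satisfies $n_d+1\le N$ (with $N=\prod_{i=1}^{d-1}(n_i+1)$), all $\sum_{i=1}^d\binom{n_i+1}{2}$ equations are linearly independent; this gives the first formula. Since the rank of a linear system is lower semicontinuous in $A$, it suffices to exhibit one tensor at which the rank is full, and here the natural model is the ``spread-out monomial'' tensor $A=\sum_{k=0}^{n_d}e_{\alpha_1(k)}\otimes\dots\otimes e_{\alpha_{d-1}(k)}\otimes e_k$, where $k\mapsto(\alpha_1(k),\dots,\alpha_{d-1}(k))$ is an injection $\{0,\dots,n_d\}\hookrightarrow\prod_{i=1}^{d-1}\{0,\dots,n_i\}$ — available precisely because $n_d+1\le N$ — chosen so that every value is attained in every slot. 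For such an $A$ the flattenings $M^{(i)}$ are $0/1$ matrices whose supports are disjoint along the summands, the system decouples, and the rank is read off by a bookkeeping computation, in the same spirit as the Fermat polynomial $\sum x_i^d$ is used in the dimensional part of Proposition \ref{prop:singspacesym}.

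For the remaining range $n_d+1\ge N$ I would reduce to the boundary case of the claim above. A general such $A$ has $d$-th flattening of rank $N$, so its image $W\subseteq\C^{n_d+1}$ is an $N$-dimensional subspace and $A\in\C^{n_1+1}\otimes\dots\otimes\C^{n_{d-1}+1}\otimes W$; by the defining equation of Theorem \ref{thm:lim} for the last factor, every singular $d$-tuple $x^1\otimes\dots\otimes x^d$ of $A$ has $x^d$ proportional to $A\cdot(x^1\otimes\dots\otimes x^{d-1})\in W$. Hence the singular $d$-tuples of $A$ coincide with those of $A$ viewed as a (general) tensor in $\C^{n_1+1}\otimes\dots\otimes\C^{n_{d-1}+1}\otimes W\cong\C^{n_1+1}\otimes\dots\otimes\C^{n_{d-1}+1}\otimes\C^N$, and so do their linear spans, i.e.\ the singular spaces. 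Since the new format $(n_1,\dots,n_{d-1},N-1)$ lies at the boundary of the range of the first formula, that formula applies to it and, after simplifying $N^2-\binom{N}{2}=\binom{N+1}{2}$, yields the second formula. (Concretely this matches the direct computation: imposing only the $d$-th group already cuts the ambient space down to a copy of $\mathrm{Sym}^2\C^N$, of dimension $\binom{N+1}{2}$, and then the first $d-1$ groups are imposed on it.)

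Finally, the independence of the singular $d$-tuples: since the singular space is the linear span of the singular $d$-tuples, they form a basis exactly when their number equals the dimension of the singular space. Comparing the dimension formula just established with the count of Theorem \ref{thm:count} (for $d=2$ this count is $\min(n_1+1,n_2+1)$) turns this into a finite comparison that picks out exactly the formats listed. I expect the main obstacle to be the explicit rank computation in the second paragraph: selecting a model tensor that is simultaneously convenient for all $d$ groups of equations, and carrying out the combinatorics showing the equations are independent, both require care, and this is the one genuinely non-formal step of the proof.
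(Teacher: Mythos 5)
Your overall architecture is the same as the paper's sketch (prove independence of the $\sum_i\binom{n_i+1}{2}$ equations at one explicit tensor when $n_d+1\le N$, then reduce the range $n_d+1\ge N$ to the boundary format via a subspace $L\subset\C^{n_d+1}$ of dimension $N$, and finally compare the dimension with the count of Theorem \ref{thm:count}), and your reformulation via flattenings ($M^{(i)}(Z^{(i)})^t$ symmetric) is correct. But the one step you yourself flag as the heart of the matter is exactly where the proposal breaks: the $0/1$ ``spread-out monomial'' tensor $A=\sum_k e_{\alpha_1(k)}\otimes\cdots\otimes e_{\alpha_{d-1}(k)}\otimes e_k$ does \emph{not} have independent equations in general, so it cannot serve as the model. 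Already for $d=2$ (where $n_d+1\le N$ forces the square case) your model is a permutation matrix, i.e.\ up to relabelling the identity $I_n$: there the two groups of equations both say ``$Z$ is symmetric'', the rank is $\binom{n}{2}$ instead of $2\binom{n}{2}$, and the solution space is $\mathrm{Sym}^2\C^n$ of dimension $\binom{n+1}{2}$, not the claimed $n$. The same collapse happens in the boundary format you need most: for $\C^2\otimes\C^2\otimes\C^4$ with the bijection model $e_0e_0e_0+e_0e_1e_1+e_1e_0e_2+e_1e_1e_3$, the two equations of groups $1$ and $2$ are linear combinations of the six equations of group $3$ (e.g.\ $Z_{100}+Z_{111}-Z_{002}-Z_{013}$ is minus the sum of $Z_{002}-Z_{100}$ and $Z_{013}-Z_{111}$), so the rank is at most $6<8$. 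Since your treatment of the case $n_d+1\ge N$ is a reduction to precisely this boundary case, the gap propagates to the second formula as well. The point the paper's model is designed for is to break this symmetry: it takes the ``diagonal'' tensor with \emph{distinct} integer weights, $A=\sum k(i_1,\ldots,i_{d-1})\, e^1_{i_1}\otimes\cdots\otimes e^{d-1}_{i_{d-1}}\otimes e^d_{k(i_1,\ldots,i_{d-1})}$; for $d=2$ this is $\mathrm{Diag}(0,1,\ldots,N-1)$, and the pairwise $2\times2$ subsystems in the unknowns $(Z_{pq},Z_{qp})$ have determinant $d_p^2-d_q^2\neq0$, which is exactly what fails when all weights equal $1$.

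Two smaller points. First, in the range $n_d+1\ge N$ you pass to the span of the singular $d$-tuples, but the quantity in the statement is the dimension of the solution set of the linear equations of Proposition \ref{prop:singspacegen}; to make the reduction rigorous you should argue directly that for general $A$ this solution set is contained in $\C^{n_1+1}\otimes\cdots\otimes\C^{n_{d-1}+1}\otimes L$ (this does follow from the equations of the first $d-1$ groups together with the rank-$N$ condition on the $d$-th flattening, but it is not automatic from ``the singular space is spanned by singular tuples'', which is itself an unproved assertion at this point). Second, in the last paragraph, equality of the number of singular $d$-tuples with the dimension of the singular space is necessary but not by itself sufficient for independence, unless you invoke that same spanning assertion; and note that a bare numerical comparison does not reproduce the stated list (for $\C^2\otimes\C^2\otimes\C^2$ the count is $6$ while the dimension is $8-3=5$), so this step needs more care than ``a finite comparison''.
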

\begin{proof}
Note that if $n_d+1\ge N$,
for any tensor $ A\in\C^{n_1+1}\otimes\ldots\otimes\C^{n_d+1}$,
there is a subspace $L\subset \C^{n_d+1}$ of dimension $N$
such that $ A\in\C^{n_1+1}\otimes\ldots\otimes\C^{n_{d-1}+1}\otimes L$,
hence all singular $d$-tuples of $A$ lie in $A\in\C^{n_1+1}\otimes\ldots\otimes\C^{n_{d-1}+1}\otimes L$.
Note that for $n_d+1=\prod_{i=1}^{d-1}(n_i+1)$, then the singular space has dimension
$N=\prod_{i=1}^d(n_i+1)-\sum_{i=1}^{d}{{n_i+1}\choose 2}$.
It can be shown that
posing $k(i_1,\ldots, i_{d-1})=\sum_{j=1}^{d-1}\left[\left(\prod_{s\le j-1}(n_s+1)\right)i_j\right]$,
then the tensor $A=\displaystyle\sum_{i_1=0}^{n_1}\ldots\sum_{i_{d-1}=0}^{n_{d-1}}k(i_1,\ldots, i_{d-1}) e^1_{i_1}\ldots e^{d-1}_{i_{d-1}}
e^d_{k(i_1,\ldots, i_{d-1})}$ is general in the sense that the $\sum_{i=1}^{d}{{n_i+1}\choose 2}$ 
corresponding equations 
are independent
(note that $k(i_1,\ldots, i_{d-1})$ covers all integers between $0$ and $N-1$).
For $n_d+1\ge N$ the dimension stabilizes to
$N^2-\sum_{i=1}^{d-1}{{n_i+1}\choose 2}-{{N+1}\choose 2}={{N+1}\choose 2}-\sum_{i=1}^{d-1}{{n_i+1}\choose 2}$.
\end{proof}

\begin{remark} For a general $A\in\C^{n_1+1}\otimes\ldots\otimes\C^{n_d+1}$,
the $\sum_{i=1}^{d}{{n_i+1}\choose 2}$ linear equations of the singular space of $A$
are independent if $n_d+1\le N+1$. 
\end{remark}

\begin{remark}
In the case of symmetric matrices, the singular space of $A$ consists of
all matrices commuting with $A$. If $A$ is regular, this space is spanned by the powers of $A$. If $A$ is any matrix (not necessarily symmetric), the singular space of $A$ consists of all matrices with the same singular vector pairs as $A$.
These properties seem not to generalize to arbitrary tensors. Indeed the tensors in the singular space of a tensor $A$ may have singular vectors different from those of $A$, even in the symmetric case. This is apparent for binary forms.
The polynomials $g$ having the same eigentensors as $f$, satisfy the equation
$g_xy-g_yx=\lambda(f_xy-f_yx)$ for some $\lambda$,
which in degree $d$ even has (in general) the solutions $g=\mu_1f+\mu_2(x^2+y^2)^{d/2}$ with $\mu_1, \mu_2\in\C$,
while  for degree $d$ odd has (in general) the solutions $g=\mu_1f$. In both cases, these solutions are strictly contained
in the singular space of $f$.
\end{remark}

In any case, a positive result which follows from Prop. \ref{prop:singspacesym},
\ref{prop:singspacegen}, \ref{prop:singspacedim} is the following
\begin{corollary}\hfill
\begin{itemize}
\item{(i)} Let $n_1\le\ldots\le n_d$, $N=\prod_{i=1}^{d-1}(n_i+1)$,
$M=\min(N,n_d+1)$.
A general tensor $A\in \C^{n_1+1}\otimes\ldots\otimes\C^{n_d+1}$ has a tensor decomposition given by $NM-\sum_{i=1}^{d-1}{{n_i+1}\choose 2}-{{N+1}\choose 2}$
 singular vector $d$-tuples.

\item{(ii)} A general symmetric tensor $A\in \mathrm{Sym}^d\C^{n+1}$ has a symmetric tensor decomposition given by ${{n+d}\choose d}-{{n+1}\choose 2}$ 
eigentensors.
\end{itemize}

The decomposition in (i) is not minimal unless $d=2$, when it is given by the SVD.

The decomposition in (ii) is not minimal unless $d=2$, when it is the spectral decomposition, as sum of $(n+1)$ (squared) eigenvectors.

\end{corollary}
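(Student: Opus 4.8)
The plan is to read the Corollary off the three preceding Propositions by a single mechanism. First I would record the common principle: the tensor $A$ always lies in its own singular space (Proposition \ref{prop:singspacegen} in general, Proposition \ref{prop:singspacesym} in the symmetric case), and that singular space is by construction the linear span of the singular vector $d$-tuples of $A$ (respectively of the eigentensors of $A$); moreover every singular tuple $(x_1,\ldots ,x_d)$ represents the decomposable tensor $x_1\otimes\cdots\otimes x_d$, and every eigentensor is the $d$-th power of a linear form, hence of rank one. Consequently $A$ is a linear combination of rank one tensors taken from the finite set of its singular tuples; extracting from this spanning set a basis of the singular space expresses $A$ as a sum of $\dim(\text{singular space})$ decomposable summands. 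By Proposition \ref{prop:singspacedim} this dimension is the number displayed in (i), and by Proposition \ref{prop:singspacesym} it is ${n+d\choose d}-{n+1\choose 2}$ in (ii); for a general $A$ none of the coefficients of the expansion vanish, so the stated number of summands is actually attained. This proves (i) and (ii).

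For the last two assertions I would specialize to $d=2$. In (i) the displayed count equals $\min(m,n)$, which is the generic rank of an $m\times n$ matrix, and the relevant singular tuples are exactly the $\min(m,n)$ rank one matrices $u_i v_i^t$ of the SVD; thus the decomposition produced above is the SVD itself, and it is minimal because $\min(m,n)$ is already the generic rank. In (ii), $d=2$ gives ${n+2\choose 2}-{n+1\choose 2}=n+1$, the eigentensors are the $n+1$ squared eigenvectors, and the decomposition is the spectral decomposition, again of minimal length. For $d\ge 3$ the numbers in (i) and (ii) grow like $\prod_i(n_i+1)$, resp.\ ${n+d\choose d}$, which far exceeds the generic rank (by the Alexander--Hirschowitz theorem in the symmetric case, and by the corresponding count of expected secant dimensions in general), so these decompositions are not minimal; this is the same contrast with the Eckart--Young Theorem already illustrated in Example \ref{exa:f}.

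The step I expect to require the most care is the one packed into ``the singular space is spanned by its singular tuples'': one must be sure that, for a \emph{general} $A$, the singular tuples genuinely span the whole linear space cut out by the singular-space equations, so that the dimension computed in Proposition \ref{prop:singspacedim} (resp.\ \ref{prop:singspacesym}) is the right number of summands to quote and not merely an upper bound for it. This is exactly where the explicit ``general'' tensors produced in the proofs of those Propositions, together with a semicontinuity argument, do the work; once the spanning property and the dimension count are both available, the Corollary follows at once. (The finer fact that for some small formats the singular tuples even form a basis of the singular space is not needed here — only that they span it.)
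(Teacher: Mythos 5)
Your argument is essentially the paper's own: the Corollary is presented there as an immediate consequence of Propositions \ref{prop:singspacesym}, \ref{prop:singspacegen} and \ref{prop:singspacedim} --- $A$ lies in its singular space, which is spanned by the rank-one singular tuples (resp.\ eigentensors) and has the stated dimension, so extracting a basis from this spanning set gives the decomposition, with the $d=2$ cases reducing to the SVD and the spectral decomposition exactly as you describe. One small caveat: the spanning property you rightly single out as the delicate step is not actually derived from the explicit tensors in those propositions (which only establish the independence of the defining equations, hence the dimension); in the paper it is simply asserted when the singular space is introduced, so your proof is no less complete than the paper's, but the attribution of that step should point to the definition/discussion of the singular space rather than to the propositions.
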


\subsection{The Euclidean Distance Degree and its duality property}
The construction of critical points of the distance from a point $p$,
can be generalized to any affine (real) algebraic variety $\X$.

Following \cite{DHOST}, we call Euclidean Distance Degree (shortly {\it ED degree}) the number of critical points
of $d_p=d(p,-)\colon \X\to\R$, allowing complex solutions. As before, the number of critical points does not depend on $p$, provided $p$ is generic.
For a elementary introduction, see the nice survey \cite{Thomas}.

Theorem \ref{eyrevisited} says that the ED degree of the variety $\M_r$ defined in
\S \ref{sec:svdey} is ${{\min  \{ m,n\}}\choose r}$, while
Theorem \ref{closestorthogonal} says that the ED degree of the variety $O(n)$ is $2^n$. 
The values computed in Theorem \ref{thm:count} give the ED degree of the Segre variety $\PP^{n_1}\times\ldots\times\PP^{n_d}$, while the 
Cartwright-Sturmfels formula in Theorem \ref{thm:counts}
gives the ED degree of the Veronese variety $v_d(\PP^n)$.

\begin{theorem}\cite[Theorem 5.2, Corollary 8.3]{DHOST}\label{thm:critrank} Let $p$ be a tensor. There is a canonical bijection between 
\begin{itemize}
\item{}critical points of the distance
from $p$ to rank $\le 1$ tensors

\item{}critical points of the distance from $p$ to hyperdeterminant hypersurface.
\end{itemize}
Correspondence is $x\mapsto p-x $
\end{theorem}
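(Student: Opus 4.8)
The plan is to exploit the duality between a variety $\X$ and its dual $\X^*$ at the level of critical points of distance functions, which is the central mechanism of \cite{DHOST}. First, recall from \S \ref{sec:tensors} that in the Euclidean setting the affine cone over the dual variety consists precisely of the vectors orthogonal to some tangent space of (the cone over) $\X$. Here $\X = \cT_1$ is the Segre variety of rank $\le 1$ tensors, and its dual $\X^* $ is the hyperdeterminant hypersurface, since we are in boundary or sub-boundary format $n_d \le \sum_{i=1}^{d-1} n_i$ (this is exactly the hypothesis under which the hyperdeterminant is nonzero, as recalled in \S \ref{sec:tensors}). The key classical fact, which I would cite from \cite{DHOST} or prove directly via the conormal variety, is the biduality statement: the conormal variety $N_\X \subset V \times V^*$, consisting of pairs $(x, y)$ with $x$ a smooth point of the cone over $\X$ and $y$ orthogonal to $\T_x\X$, is symmetric under swapping the two factors, so $N_\X = N_{\X^*}$ up to the swap.

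The main step is then the following. A point $x \in \cT_1$ (smooth, i.e. a genuine rank-one tensor) is a critical point of the distance from $p$ to $\cT_1$ precisely when $p - x$ is orthogonal to $\T_x \cT_1$; by the description of the dual cone this says $(x, p-x) \in N_\X$. By biduality $(p - x, x) \in N_{\X^*}$, which unwinds to: $p-x$ is a smooth point of the hyperdeterminant hypersurface and $x$ is orthogonal to $\T_{p-x}\X^*$, i.e. $x = p - (p-x)$ is orthogonal to the tangent space of $\X^*$ at $p-x$. That is exactly the statement that $p - x$ is a critical point of the distance from $p$ to the hyperdeterminant hypersurface. Thus the assignment $x \mapsto p - x$ carries critical points on $\cT_1$ to critical points on $\X^*$, and since it is an involution (applying it twice and using biduality again returns $x$), it is a bijection. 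One must also check the genericity of $p$ ensures that all critical points are smooth points of the respective varieties and that no critical point of one side is lost — this is where one invokes that for generic $p$ the ED degree counts only smooth critical points, a fact from \cite{DHOST}.

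I expect the main obstacle to be bookkeeping around smoothness and the singular loci: $\cT_1$ as a cone is smooth away from the origin, but the hyperdeterminant hypersurface is typically very singular, and one must ensure that for generic $p$ the point $p - x$ lands on the smooth locus of $\X^*$ whenever $x$ is a (nonzero) rank-one critical point, and conversely. The clean way to handle this is to phrase everything in terms of the conormal variety $N_\X$, which is smooth and irreducible of dimension $\dim V - 1$, and to observe that the two projections $N_\X \to \X$ and $N_\X \to \X^*$ together with the affine-linear section cut out by $x + y = p$ give, for generic $p$, a reduced zero-dimensional scheme whose points map bijectively (via the two projections) onto the two sets of critical points; the correspondence $x \leftrightarrow y = p - x$ is then literally read off from the two coordinate projections. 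Once this framework is in place the theorem is immediate, and the remaining content is precisely the identification $\cT_1^* = \{\textrm{hyperdeterminant} = 0\}$ already recalled in the text.
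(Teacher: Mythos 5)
The paper does not prove this statement at all: it is quoted verbatim from \cite{DHOST} (Theorem 5.2 and Corollary 8.3), so there is no internal proof to compare against. Your proposal correctly reconstructs the argument used in that reference: identify critical points $x$ of $d_p$ on the cone over $\cT_1$ with pairs $(x,p-x)$ in the conormal variety, invoke biduality of the conormal variety together with the identification of $\cT_1^*$ with the hyperdeterminant hypersurface (valid since $n_d\le\sum_{i<d}n_i$), and handle smoothness via genericity of $p$; the involutive map $x\mapsto p-x$ then gives the bijection, exactly as in the ED-duality theorem. The only blemishes are minor: the affine conormal variety has dimension $\dim V$ (not $\dim V-1$) and is irreducible but not smooth in general (only its open part over the smooth locus is), though neither slip affects the validity of the argument, which really only needs irreducibility, biduality, and genericity of the linear section $x+y=p$.
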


In particular, from the $15$ critical points for the distance from the $3\times 3\times 2$
tensor $f$ defined in Example \ref{exa:f} to the variety of rank one matrices, we may recover
the $15$ critical points for the distance from $f$  to the
hyperdeterminant hypersurface. It follows that $\mathrm{Det}(f-p_i)=0$ for the $15$ critical points $p_i$.

\noindent\hskip 0.3cm The following result generalizes Theorem \ref{thm:critrank} to any projective variety $\X$.

\begin{theorem}\cite[Theorem 5.2]{DHOST} Let $\X\subset\PP^n$ be a projective variety, $p\in\PP^n$. There is a canonical bijection between 
\begin{itemize}
\item{}critical points of the distance
from $p$ to $\X$

\item{}critical points of the distance from $p$ to
the dual variety $\X^*$.
\end{itemize}
Correspondence is $x\mapsto p-x $. In particular
$\mathrm{EDdegree} (\X)=\mathrm{EDdegree }(\X^*)$
\end{theorem}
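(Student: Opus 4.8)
The plan is to exploit the Euclidean interpretation of duality recalled just before the statement: for the affine cone over $\X$, the dual cone consists of all vectors orthogonal to some tangent space of $\X$. Fix a generic point $p\in\PP^n$ and lift everything to the affine cones $\hat\X\subset\R^{n+1}$ and $\hat\X^*\subset\R^{n+1}$. A smooth point $x\in\hat\X$ is a critical point of $d_p$ restricted to $\hat\X$ precisely when $p-x\perp T_x\hat\X$. First I would show that this same condition, read from the side of $\hat\X^*$, says that $p-(p-x)=x$ is orthogonal to $T_{p-x}\hat\X^*$, i.e.\ that $y:=p-x$ is a critical point of the distance from $p$ to $\hat\X^*$. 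This is where the classical \emph{biduality} theorem $\X^{**}=\X$ enters: the conormal variety
$$N\X=\overline{\{(x,y)\mid x\in\X_{sm},\ y\perp T_x\X\}}\subset\PP^n\times(\PP^n)^*$$
is symmetric in the two factors, with the first projection dominating $\X$ and the second dominating $\X^*$. So a pair $(x,y)\in N\X$ with $y=p-x$ (a transversality condition cutting out finitely many points for generic $p$) is the same datum as a pair $(y,x)\in N\X^*$ with $x=p-y$.

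The key steps, in order, are: (1) translate ``critical point of $d_p$ on $\X$'' into the incidence statement $(x,\,p-x)\in N\X$, taking care that genericity of $p$ guarantees the critical points lie in the smooth locus and that the line through $x$ and $p$ meets the relevant tangent spaces transversally; (2) invoke biduality to identify $N\X$ with $N\X^*$ under the swap $(x,y)\mapsto(y,x)$; (3) observe that under this swap the affine relation $y=p-x$ becomes $x=p-y$, which is exactly the condition that $y$ is a critical point of $d_p$ on $\X^*$; (4) conclude that $x\mapsto p-x$ is the desired bijection on critical point sets, and since it is a bijection, $\mathrm{EDdegree}(\X)=\mathrm{EDdegree}(\X^*)$. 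Steps (2)--(3) are essentially formal once the setup is right; the genericity bookkeeping in step (1) is the only delicate point.

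The main obstacle I expect is making the affine-versus-projective dictionary precise: the ED degree is defined via the \emph{affine} distance function, so I must be careful that the cone over $\X^*$ (which is the \emph{projective} dual) is really the set of vectors orthogonal to tangent spaces of the affine cone $\hat\X$, including the correct treatment of the extra ``radial'' tangent direction in the cone, and that the generic data point $p\in\R^{n+1}$ avoids the finitely many bad loci where some critical point degenerates or where $x$ and $p-x$ fail to be a genuine conormal pair. Once this is handled, the correspondence $x\mapsto p-x$ is visibly an involution-type bijection and the equality of ED degrees follows immediately; for the tensor case (Theorem \ref{thm:critrank}) one just specializes $\X$ to the Segre variety $\cT_1$, whose dual is the hyperdeterminant hypersurface, as recalled above.
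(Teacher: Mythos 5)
Your sketch is correct in outline and follows essentially the same route as the source the paper cites: the survey states this result without proof, quoting \cite[Theorem 5.2]{DHOST}, whose argument is exactly the one you describe --- pass to the affine cones, identify a critical point $x$ of $d_p$ on $\hat\X$ with the conormal pair $(x,\,p-x)\in N\X$ (the cone structure automatically giving $\langle x,\,p-x\rangle=0$), and use biduality $N\X=N\X^{*}$ under the swap of factors to read the same pair as the criticality of $p-x$ on $\hat\X^{*}$, with genericity of $p$ ensuring all critical points lie over smooth points of both $\X$ and $\X^{*}$ where the fiber description of the conormal variety is valid. Since the paper gives no proof of its own, there is nothing further to compare; your flagged ``genericity bookkeeping'' is indeed the only delicate point, and it is handled in \cite{DHOST} via the ED correspondence over the space of data points.
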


\begin{figure}[h]
\begin{center}
\includegraphics[scale=.6]{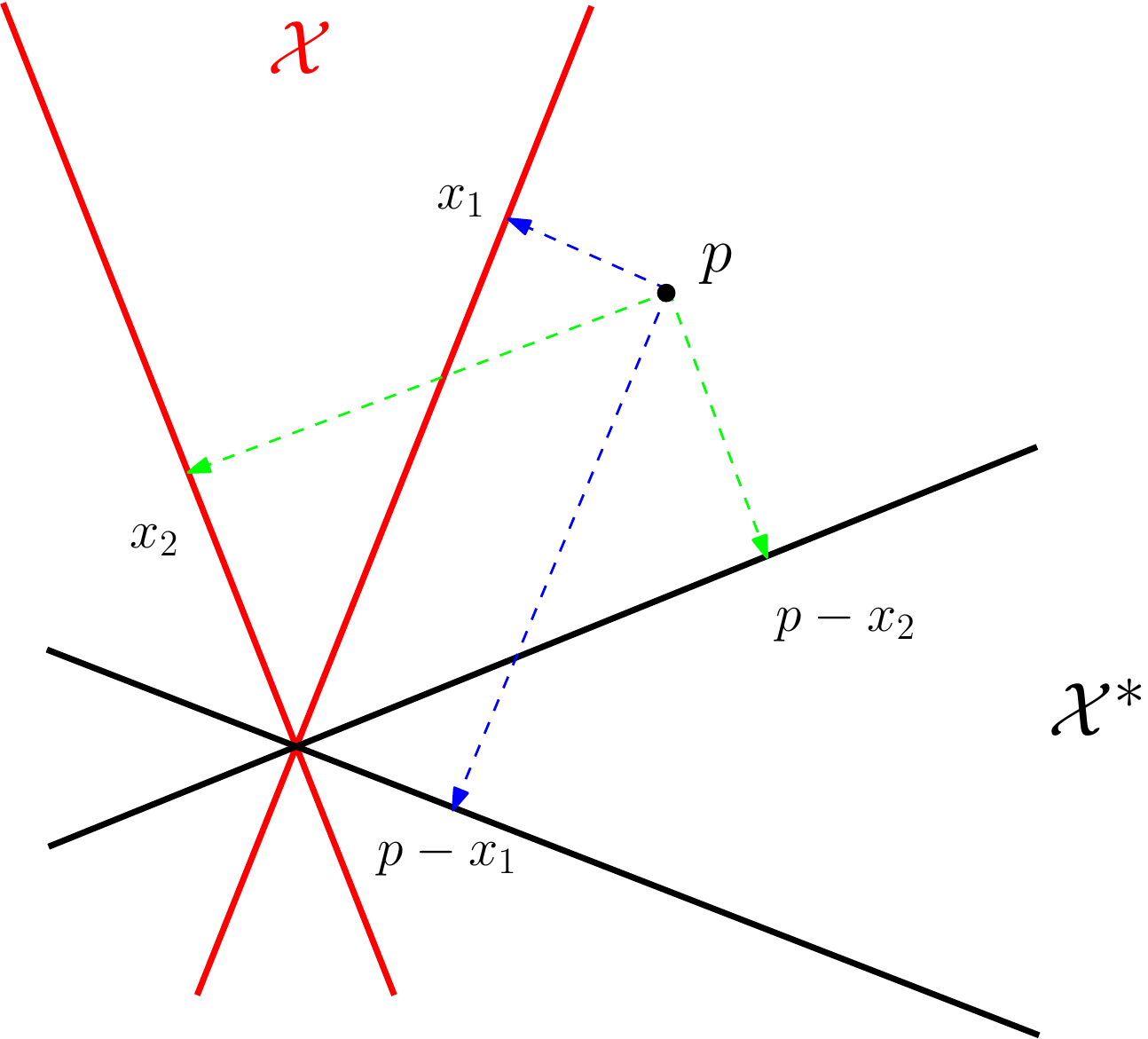} \,\,\,
\caption{The bijection between critical points on $\X$ and
critical points on $\X^*$.}
\label{fig:dualED}
\end{center}
\end{figure}

\subsection{Higher order SVD}
In \cite{HOSVD}, L. De Lathauwer, B. De Moor, and J. Vandewalle proposed
a higher order generalization of SVD. This paper has been quite influential
and we sketch this contruction for completeness (in the complex field).

\begin{theorem}[HOSVD,  De Lathauwer, De Moor, Vandewalle, \cite{HOSVD}]
A tensor $A\in\C^{n_1+1}\otimes\ldots\otimes\C^{n_d+1}$ can be multiplied in the $i$-th mode by unitary matrices $U_i\in U(n_i+1)$ in such a way that the resulting tensor $S$ has the following properties:
\begin{enumerate}
\item{(i)} (all-orthogonality) For any $i=1,\ldots, d$ and $\alpha=0,\ldots, n_i$
denote by $S^i_{\alpha}$ the slice 
in $\C^{n_1+1}\otimes\ldots\widehat{\C^{n_i+1}}\ldots\otimes\C^{n_d+1}$ obtained by fixing
the $i$-index equal to $\alpha$. Then for $0\le\alpha<\beta\le n_i$
we have $\overline{S^i_{\alpha}}\cdot S^i_{\beta}=0$, that is any two parallel slices are orthogonal
according to Hermitian product.
\item{(ii)} (ordering) for the Hermitian norm, for all $i=1,\ldots, d$ 
$$\left\lVert S^i_{0}\right\rVert\ge\left\lVert S^i_{1}\right\rVert\ge\ldots\ge
\left\lVert S^i_{n_i}\right\rVert$$
\end{enumerate}
\end{theorem}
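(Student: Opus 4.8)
The plan is to reduce HOSVD to a sequence of ordinary (matrix) SVDs applied to the unfoldings of the tensor, exactly one per mode. First I would recall the \emph{$i$-th unfolding} (or matricization) $A_{(i)}$ of $A$: this is the $(n_i+1)\times\prod_{j\neq i}(n_j+1)$ matrix whose rows are indexed by the $i$-index and whose columns are indexed by the multi-index $(i_1,\ldots,\widehat{i_i},\ldots,i_d)$, so that the row $\alpha$ of $A_{(i)}$ is precisely (the vectorization of) the slice $S^i_\alpha$ before any transformation. Multiplying $A$ in the $i$-th mode by a unitary $U_i\in U(n_i+1)$ corresponds, on the level of unfoldings, to left multiplication $A_{(i)}\mapsto U_i^{*}A_{(i)}$ (and leaves the structure of the columns untouched); multiplying in the other modes acts by a unitary change of basis on the columns of $A_{(i)}$, i.e.\ right multiplication by a unitary, which does not affect the row-orthogonality conclusions we need for mode $i$.

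The key steps, in order, are: (1) For each mode $i$, apply the (complex/Hermitian) SVD of Theorem~\ref{thm:free} to the unfolding $A_{(i)}$, writing $A_{(i)} = U_i \Sigma_{(i)} W_i^{*}$ with $U_i\in U(n_i+1)$ unitary and $\Sigma_{(i)}$ pseudodiagonal with nonincreasing nonnegative entries; record only the left factor $U_i$. (2) Set $S := A\cdot(U_1^{*},\ldots,U_d^{*})$, the tensor obtained by multiplying $A$ in mode $i$ by $U_i^{*}$ for every $i$. (3) Verify all-orthogonality: fix a mode $i$; the unfolding $S_{(i)}$ equals $U_i^{*}A_{(i)}(\text{unitary})=\Sigma_{(i)}W_i^{*}(\text{unitary})$, whose rows are mutually orthogonal because $\Sigma_{(i)}$ has orthogonal rows and right multiplication by a unitary preserves Hermitian inner products of rows; but the inner product of rows $\alpha$ and $\beta$ of $S_{(i)}$ is exactly $\overline{S^i_\alpha}\cdot S^i_\beta$, giving (i). (4) Verify ordering: the Hermitian norm $\lVert S^i_\alpha\rVert$ is the norm of the $\alpha$-th row of $S_{(i)}$, which equals the $\alpha$-th singular value $\sigma_\alpha^{(i)}$ of $A_{(i)}$ (again since right multiplication by a unitary is an isometry on rows); these are nonincreasing by the SVD convention, giving (ii).

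The one genuine subtlety — the ``main obstacle'' — is checking that the $d$ separate SVDs are mutually compatible, i.e.\ that choosing $U_i$ from the mode-$i$ SVD does not disturb the all-orthogonality or ordering already arranged in mode $j\neq i$. This is resolved by the observation in step (3): for the mode-$i$ conclusions only the \emph{row space} structure of $S_{(i)}$ matters, and the transformations $U_j^{*}$ for $j\neq i$ act on $S_{(i)}$ purely by right multiplication by a unitary (a Kronecker product of the $U_j^{*}$, which is unitary), hence they are isometries on the rows and preserve both the pairwise orthogonality and the individual norms. Thus the modes decouple completely and the $d$ SVDs can be performed independently; there is no iterative refinement needed. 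I would close by remarking that, unlike in the matrix case, the pseudodiagonal ``core'' tensor $S$ is generally not diagonal, which is why HOSVD is a genuinely weaker normal form than SVD.
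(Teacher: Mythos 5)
Your proposal is correct, and it is essentially the standard argument: the paper itself does not prove this theorem (it only states it and cites De Lathauwer--De Moor--Vandewalle, remarking that the construction is sketched ``for completeness''), and your reduction to one matrix SVD per mode via the unfoldings $A_{(i)}$ is precisely the proof given in that reference. Your treatment of the only delicate point is right: since the mode-$j$ multiplications for $j\neq i$ act on $S_{(i)}$ by right multiplication by a unitary (a Kronecker product of unitaries), the Gram matrix of the rows of $S_{(i)}$ equals that of $U_i^{*}A_{(i)}=\Sigma_{(i)}W_i^{*}$, namely $\mathrm{Diag}\bigl((\sigma^{(i)}_0)^2,\ldots,(\sigma^{(i)}_{n_i})^2\bigr)$, which yields both all-orthogonality and the ordering simultaneously, with no interaction between modes. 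The only cosmetic point is the naming convention: you multiply by $U_i^{*}$ rather than $U_i$, which is immaterial since the theorem only asserts the existence of suitable unitary factors.
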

$\left\lVert S^i_{j}\right\rVert$ are the $i$-mode singular values
and the columns of $U_i$ are the $i$-mode singular vectors.
For $d=2$, $\left\lVert S^i_{j}\right\rVert$ do not depend on $i$
and we get the classical SVD. This notion has an efficient algorithm computing it.
We do not pursue it further because the link with the 
critical points of the distance is weak, although it can be employed by suitable iterating methods

 \noindent
  \textsc{G. Ottaviani, R. Paoletti} -
  Dipartimento di Matematica e Informatica ``U. Dini'', Universit\`a di Firenze, viale Morgagni 67/A, 50134 Firenze (Italy). e-mail: \texttt{ottavian@math.unifi.it, raffy@math.unifi.it}
  
 \end{document}